\newtheorem{theorem}{Theorem}
\numberwithin{theorem}{section}
\newtheorem{proposition}[theorem]{Proposition}
\theoremstyle{definition}
\newtheorem{definition}[theorem]{Definition}
\newtheorem{example}[theorem]{Example}
\theoremstyle{remark}
\newtheorem{remark}[theorem]{Remark}
\newcommand{\GT}{\mathrm{GT}}
\newcommand{\PP}{\mathbb{P}}
\newcommand{\RR}{\mathbb{R}}
\newcommand{\CC}{\mathbb{C}}
\newcommand{\ZZ}{\mathbb{Z}}
\newcommand{\ones}{\mathbbm{1}}
\newcommand{\conv}{\mathrm{conv}}
\newcommand{\capac}{\mathrm{cap}}
\newcommand{\supp}{\mathrm{supp}}
\newcommand{\rsp}{\mathrm{rowspan}}
\newcommand{\SL}{\operatorname{SL}}
\newcommand{\GL}{\operatorname{GL}}
\newcommand{\Mcal}{\mathcal{M}}
\newcommand{\KL}{\mathrm{KL}}
\newcommand{\T}{^\mathsf{T}}
\newcommand{\ci}{\perp\!\!\!\perp}
\newcommand{\exSymbol}{$\diamondsuit$}
\title[Toric invariant theory 
for ML estimation in log-linear models]{Toric invariant theory \\ 
for maximum likelihood estimation \\ in log-linear models}
\author[Carlos Am\'{e}ndola, Kathl\'{e}n  Kohn, Philipp Reichenbach, Anna Seigal]{{\normalsize Carlos Am\'{e}ndola, Kathl\'{e}n  Kohn, Philipp Reichenbach, Anna Seigal}}
\begin{document}

\begingroup
\let\MakeUppercase\relax 
\maketitle
\endgroup

\begin{abstract}
We establish connections between
invariant theory and maximum likelihood estimation for discrete statistical models.
We show that norm minimization over a torus orbit is equivalent to maximum likelihood estimation in log-linear models. 
We use notions of stability under a torus action to characterize the existence of the maximum likelihood estimate, and 
discuss connections to scaling algorithms.
\end{abstract}

\section{Introduction}

Fruitful, sometimes unexpected, connections between algebra and statistics are constantly being discovered in the field of algebraic statistics. In this paper we unveil a connection between toric invariant theory and maximum likelihood estimation for log-linear models. Log-linear models are widespread in statistics and play a fundamental role in categorical data analysis, with a wide range of applications \cite{bishop2007discrete}. They consist of discrete probability distributions whose coordinatewise logarithm lies in a fixed linear space and include, for example, independence models and discrete graphical models \cite{Lauritzen}. There is a long history of the study of log-linear models in statistics, with an emphasis on understanding their maximum likelihood inference \cite{MLEloglinear}. This concerns the existence of the maximum likelihood estimate (MLE), which maximizes the likelihood function given sample data, and statistical procedures for its computation.

Log-linear models play a prominent role in algebraic statistics \cite{Sullivant}. The key link to algebra is that the Zariski closure of a log-linear model is a toric variety, defined by a monomial parametrization. Toric varieties have a foundational place among the algebraic varieties studied in algebraic geometry \cite{cox2011toric}.

In our companion work~\cite{Gaussianpaper}, we establish a connection between finding the MLE and norm minimization along an orbit under a group action. We focus there on the setting of Gaussian group models, centered multivariate Gaussian models whose concentration matrices are of the form $g\T g$, where $g$ lies in a group. In this paper, we study the connection between invariant theory and maximum likelihood estimation in the setting of discrete exponential families. We find remarkable similarities and differences between the discrete and Gaussian settings.

The paper is organized as follows. We introduce maximum likelihood estimation and our toric invariant theory setting in the expository Sections \ref{sec:MLE} and \ref{sec:InvariantTheory}.
Our main results are in Section \ref{sec:loglinear}. We give a characterization of MLE existence in terms of 
the existence of a vector of minimal norm in an orbit under a torus action (see Theorem~\ref{thm:MLEpolystableTorus}), and an explicit way to compute the MLE from such a vector (see Theorem~\ref{thm:MLEviaMomentMap}).
We provide an alternative characterization in terms of null cones in Propositions~\ref{prop:intersectNullCones} and~\ref{prop:intersectIrredComp}.
We compare iterative proportional scaling (IPS), a classical method to find the MLE for log-linear models, with approaches to  norm minimization in Section~\ref{sec:scaling}. We conclude the paper with a comparison with the multivariate Gaussian setting of \cite{Gaussianpaper} in Section~\ref{sec:comparison} and outline a possible generalization for future research.

\section{Maximum likelihood estimation}
\label{sec:MLE} 

In this section we describe our statistical set-up: maximum likelihood estimation for discrete probability distributions.
A distribution on $m$ states is determined by its probability mass function $p$, where $p_j$ is the probability that the $j$th state occurs. Such a probability mass function is a point in the $(m-1)$-dimensional probability simplex: \[\Delta_{m-1} = \left\{ p \in \RR^m \mid p_j \geq 0 \text{ for all } j \text{ and }  \sum_{j=1}^m p_j = 1 \right\}.\]
A statistical model $\Mcal$ of distributions with $m$ states is a subset of $\Delta_{m-1}$. 
    
The data for a discrete distribution is
a \emph{vector of counts} $u \in \ZZ^{m}_{\geq 0}$, where the coordinate
$u_j$ is the number of times that the $j$th state occurs, and $n = u_+ :=~\sum_{j=1}^m u_j$ is the total number of observations. The corresponding empirical distribution is $\bar{u}=~\frac{1}{n}u \in \Delta_{m-1}$. 

Maximum likelihood estimation in $\Mcal$ given data $u$ finds a point in the model most likely to give rise to the observed data. 
That is, an MLE given $u$ is a maximizer $\hat{p}$ of the likelihood function over the model $\mathcal{M}$.
The likelihood function is 
\begin{equation}\label{eq:likelihoodiscrete}
    L_u(p) =  p_1^{u_1} \cdots p_m^{u_m}.
\end{equation}
For example, if the model fills $\Delta_{m-1}$, the likelihood is maximized uniquely at $\hat{p} = \bar{u}$. 

An MLE given $u$ is, equivalently,
a point in $\mathcal{M}$ that minimizes the \emph{Kullback-Leibler} (KL) divergence to the empirical distribution $\bar{u}$.
The KL divergence from $q \in \RR_{> 0}^m$ to $p \in \RR_{> 0}^m$ is
    \[\mathrm{KL}(p\|q) = \sum_{j=1}^m p_j \log \frac{p_j}{q_j}.\]
Although the KL divergence is not a metric, for $p,q \in \Delta_{m-1}$  it satisfies $\KL(p\|q)\geq 0$, and $\KL(p\|q)=0$ if and only if $p=q$.
The logarithm of the likelihood given $u$ can be written, up to additive constant, as
    \[\ell_u(p) = -n \sum_{j=1}^m \bar{u}_j \log \frac{\bar{u}_j}{p_j} = -n \, \KL(\bar{u} \|p).\]
We see that maximizing the log-likelihood is equivalent to minimizing the KL divergence to the empirical distribution.

\section{Toric invariant theory}
\label{sec:InvariantTheory}

In this section we describe the invariant theory  of a torus action that we will use.
We begin by introducing notions of stability under a group action. 
\subsection{Stability}
Invariant theory studies actions of a group $G$ and notions of stability with respect to this action. In this article we work with linear actions on a complex vector space. Such a linear action assigns to a group element $g \in G$ an invertible matrix in $\GL_m(\CC)$. The group element $g \in G$ acts on $\CC^m$ by left multiplication with the matrix. 
The action of group element $g$ on vector $v$ is denoted by $g \cdot v$. 
For a vector $v \in \CC^m$, we define the \emph{capacity} to be 
    \[\capac(v) := \inf_{g \in G} \| g \cdot v \|^2.\]
Here, and throughout the paper, $\Vert \cdot \Vert$ denotes the Euclidean norm for vectors and the Frobenius norm for matrices.
We now define the four notions of stability for such an action.

\begin{definition}
\label{def:StabilityNotions}
Let $v \in \CC^m$. We denote the orbit of $v$ by $G \cdot v$, the orbit closure with respect to the Euclidean topology by $\overline{G \cdot v}$ and the stabilizer $\{ g \in G : g\cdot v = v \}$ by $G_v$. We say $v$ is
    \begin{itemize}\itemsep 3pt
        \item[(a)] \emph{unstable}, if $0 \in \overline{G \cdot v}$, i.e. $\capac(v) = 0$.
        \item[(b)] \emph{semistable}, if $0 \notin \overline{G \cdot v}$, i.e. $\capac(v) > 0$.
        \item[(c)] \emph{polystable}, if $v \neq 0$ and $G \cdot v$ is closed.
        \item[(d)] \emph{stable}, if $v$ is polystable and $G_v$ is finite.
    \end{itemize}
The set of unstable points is called the \emph{null cone} of the group action.
\end{definition}

\subsection{A torus action}

We consider the $d$-dimensional complex torus, denoted $\GT_d(\CC)$ or $\GT_d$. We sometimes consider the real analogue, denoted $\GT_d(\RR)$. 
We consider the action of $\GT_d$
on a complex projective space $\PP^{m-1}_\CC$,
encoded by a $d \times m$ matrix of integers $A = (a_{ij})$.
The torus element $\lambda = (\lambda_1, \ldots, \lambda_d)$ acts on a point $v$ in $\PP^{m-1}_\CC$ by multiplication by the diagonal~matrix 
\begin{equation} \begin{bmatrix} \lambda_1^{a_{11}} \lambda_2^{a_{21}} \cdots \lambda_d^{a_{d1}} & & & \\ & \lambda_1^{a_{12}} \lambda_2^{a_{22}} \cdots \lambda_d^{a_{d2}} & & \\ & & \ddots & \\ & & & \lambda_1^{a_{1m}} \lambda_2^{a_{2m}} \cdots \lambda_d^{a_{dm}} \end{bmatrix} ,
\label{eq:torusd}
\end{equation} 
i.e. it acts on the coordinates of the point $v$ via
$v_j \mapsto \lambda_1^{a_{1j}}  \cdots \lambda_d^{a_{dj}} v_j.$

\begin{remark}
\label{rmk:realnullcone}
An action of a group $G$ on $\CC^m$ induces an action on the polynomial ring $\CC[x_1,\ldots,x_m]$ by
    $g \cdot f(x) := f\big( \, g^{-1} \cdot x \big),$
where $x = (x_1,\ldots,x_m)\T$. For the action of the torus $\GT_d$ given by matrix $A$, the map on indeterminates is $x_j \mapsto \lambda_1^{-a_{1j}}  \cdots \lambda_d^{-a_{dj}} x_j$.
\end{remark}

A \emph{linearization} 
of the action of $\GT_d$ on $\PP^{m-1}$
is a corresponding action on the underlying $m$-dimensional vector space $\CC^m$. It is given by a character of the torus, $b \in \ZZ^d$. For the linearization given by matrix $A \in \ZZ^{d \times m}$ and vector $b \in \ZZ^d$, the torus element $\lambda$ acts on the vector $v$ in $\CC^m$ via
\begin{equation}
\label{eq:torusAction}
    v_j \mapsto \lambda_1^{a_{1j}-b_1}  \cdots \lambda_d^{a_{dj}-b_d} v_j.
\end{equation}

\begin{remark}
The name linearization comes from the setting of an algebraic group acting on a complex variety $X$, as follows. We fix a line bundle over~$X$, i.e. a map $p : L \to X$, with certain properties, whose fibers are copies of $\CC$. Given a group action on $X$, a linearization is an action on $L$ that agrees with the original action under projection under $p$, and that is a linear action on each fiber~\cite[Chapter 7]{Dolgachev}. 
For example, the following projection map is a line bundle,
    \[p: \{ (x,v) \in \PP_\CC^{m-1} \times \CC^m \mid v \in \ell_x \} \to \PP_\CC^{m-1},\]
where the fiber over each $x \in \PP_\CC^{m-1}$ corresponds to the line $\ell_x$ in $\CC^m$ that the point represents. That way, a linearization lifts an action on $\PP_\CC^{m-1}$ to an action on $\CC^m$.
\end{remark} 

We now consider the notions of stability in Definition~\ref{def:StabilityNotions} for this torus action. 
They specialize to give polyhedral conditions. 
The convex hull of the 
columns $a_j \in \ZZ^d$ of the matrix $A$ is the polytope
    \[P(A) := \conv\{ a_1, \ldots, a_m \}  \subseteq \RR^d.\]
The set $P(A)$ consists of vectors in $\RR^d$ of the form $Au$ for some $u \in \Delta_{m-1}$. 
We define sub-polytopes that depend on an indexing set $J \subseteq [m]$
    \[P_J(A) := \conv\{ a_j : j \in J \}.\]
For $v \in \CC^m$, let $\supp(v):= \{ j : v_j \neq 0\} \subseteq [m]$. We abbreviate $P_{\supp(v)}(A)$ to $P_v(A)$, i.e. we define 
    \[ P_v(A) := {\rm conv}\{ a_j \mid j \in \supp(v) \} .\]
For a polytope $P \subseteq \RR^d$, we denote its interior by ${\rm int}(P)$ and its relative interior by ${\rm relint}(P)$.

\begin{theorem}[Hilbert-Mumford criterion for a torus]
\label{thm:HMtorus}
Let $v \in \CC^m$ and consider the action of the complex torus $\GT_d$ on $\CC^m$ given by matrix $A \in \ZZ^{d \times m}$ with linearization $b \in \ZZ^d$. We have
\[\begin{matrix}
(a) & v \text{ unstable} & \Leftrightarrow & b \notin P_v(A) \\
(b) & v \text{ semistable} & \Leftrightarrow & b \in P_v(A) \\
(c) & v \text{ polystable} &  \Leftrightarrow & b \in {\rm relint}(P_v(A)) \\
(d) & v \text{ stable} & \Leftrightarrow & b \in {\rm int}(P_v(A)) \end{matrix}
\]
\end{theorem}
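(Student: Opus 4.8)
The plan is to translate each stability notion into a statement about a single convex function built from the squared norm along the orbit, and then read off the polyhedral conditions. Since $\|\lambda\cdot v\|^2 = \sum_{j} |\lambda|^{2(a_j-b)}|v_j|^2$ depends on $\lambda$ only through the moduli $|\lambda_i|$, I would set $y_i = \log|\lambda_i|$ and record that
\[
\capac(v) = \inf_{y\in\RR^d} f(y), \qquad f(y) = \sum_{j\in\supp(v)} |v_j|^2\, e^{2\langle y,\, a_j-b\rangle},
\]
where $a_j$ is the $j$th column of $A$ and $\langle\cdot,\cdot\rangle$ is the standard pairing. The function $f$ is a positive sum of exponentials, hence smooth and convex, and all four parts follow from its behaviour. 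Throughout I may assume $v\neq0$ and, after deleting the coordinates outside $\supp(v)$ (on which the torus acts by zero and which do not affect the orbit closure), that $\supp(v)=[m]$ and $P_v(A)=P(A)$.

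For (a) and (b), which are complementary, I argue directly. If $b\notin P_v(A)$, the separating hyperplane theorem gives $\xi\in\RR^d$ with $\langle\xi,a_j-b\rangle>0$ for all $j\in\supp(v)$, so $f(-t\xi)\to0$ as $t\to+\infty$, whence $\capac(v)=0$ and $v$ is unstable. If instead $b\in P_v(A)$, write $b=\sum_j\theta_j a_j$ with $\theta_j\ge0$, $\sum_j\theta_j=1$ supported on $\supp(v)$; weighted AM--GM applied to the terms $(|v_j|^2/\theta_j)\,e^{2\langle y,a_j-b\rangle}$ gives
\[
f(y)\ \ge\ \prod_{\theta_j>0}\Big(\tfrac{|v_j|^2}{\theta_j}\Big)^{\theta_j}\; e^{2\langle y,\ \sum_j\theta_j a_j-b\rangle}\ =\ \prod_{\theta_j>0}\Big(\tfrac{|v_j|^2}{\theta_j}\Big)^{\theta_j}\ >\ 0,
\]
using $\sum_j\theta_j a_j-b=0$. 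This is a positive lower bound independent of $y$, so $\capac(v)>0$ and $v$ is semistable, proving (a) and (b) at once.

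For (c) I would invoke the Kempf--Ness theorem: since $\GT_d$ is reductive, a nonzero $v$ has closed orbit if and only if its orbit contains a vector of minimal norm, i.e. if and only if $\inf_y f(y)$ is attained. The function $f$ is constant along the subspace $\{\xi:\langle\xi,a_j-b\rangle=0\text{ for all }j\}$, so it descends to a convex function on the complementary subspace $L:=\operatorname{span}\{a_j-b:j\in\supp(v)\}$, whose recession cone is $\{\xi\in L:\langle\xi,a_j-b\rangle\le0\text{ for all }j\}$. The relative-interior characterization states that $b\in{\rm relint}(P_v(A))$ exactly when every $\xi$ with $\langle\xi,a_j-b\rangle\le0$ for all $j$ already satisfies $\langle\xi,a_j-b\rangle=0$ for all $j$; intersecting with $L$, this says precisely that the descended function has trivial recession cone, hence is coercive and attains its infimum. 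Conversely, if $b\notin{\rm relint}(P_v(A))$, a nonzero recession direction in $L$ makes the infimum unattained. This establishes (c).

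Finally, (d) refines (c) by the stabilizer condition. For $v$ of full support the stabilizer is the kernel of the homomorphism $\GT_d\to(\CC^\times)^m$, $\lambda\mapsto(\lambda^{a_j-b})_j$, whose differential is $z\mapsto(\langle a_j-b,z\rangle)_j$; hence the stabilizer is finite exactly when $\{a_j-b:j\in\supp(v)\}$ spans $\RR^d$, i.e. when $P_v(A)$ is full-dimensional. Since a polystable $v$ satisfies $b\in{\rm relint}(P_v(A))$, and a full-dimensional polytope has ${\rm relint}={\rm int}$, the two conditions combine to $b\in{\rm int}(P_v(A))$, giving (d). The main obstacle is the closed-orbit direction of (c): without citing Kempf--Ness one must show directly that attainment of $\inf_y f$ forces the orbit to be closed, which requires checking that a convergent sequence of orbit points has a limit of full support whose phase pattern still lies in the (closed) image subtorus — this bookkeeping of phases, as opposed to moduli, is the delicate point.
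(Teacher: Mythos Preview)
Your argument is correct and you are candid about its one external dependency, but it follows a genuinely different route from the paper's Appendix~\ref{sec:appendixHM}. For (a)/(b) the paper first proves the classical one-parameter-subgroup form of Hilbert--Mumford (via a Gordan-type lemma) and then applies Farkas; your separating-hyperplane plus weighted AM--GM lower bound bypasses one-parameter subgroups altogether and is in fact more elementary. Part (d) is essentially the same in both treatments. The real divergence is (c): the paper argues directly with one-parameter subgroups, using a supporting hyperplane of a proper face of $P_v(A)$ to produce a limit point of strictly smaller support (hence ``$b$ on the boundary $\Rightarrow$ orbit not closed'') and invoking the generalized Hilbert--Mumford result for tori from \cite{KraftBook} for the converse, whereas you outsource the equivalence ``closed orbit $\Leftrightarrow$ norm minimum attained on the orbit'' to Kempf--Ness and then run a recession-cone analysis of the convex function $f$. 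Both are valid, but be aware of the logical ordering here: Kempf--Ness for a torus is Theorem~\ref{thm:kempfNessTorus}, stated \emph{after} Theorem~\ref{thm:HMtorus}, and the second proof in Appendix~\ref{sec:appendixKN} actually uses Theorem~\ref{thm:HMtorus}, so your citation reverses the paper's intended dependency (the first proof there cites \cite{KempfNess} directly, so there is no strict circularity). Your approach buys a clean analytic picture---a positive sum of exponentials attains its infimum iff the recession cone modulo the constancy subspace is trivial---at the cost of importing a nontrivial black box; the paper's approach stays self-contained within toric/polyhedral reasoning, at the cost of quoting Kraft's generalized one-parameter-subgroup lemma.
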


We give an elementary proof of Theorem~\ref{thm:HMtorus} in Appendix~\ref{sec:appendixHM}. Alternative proofs can be found in~\cite[Theorem~9.2]{Dolgachev} or \cite[Theorem~1.5.1]{Szekelyhidi}.

\subsection{The moment map}
\label{sec:the_moment_map} 
We introduce the moment map and state the Kempf-Ness theorem, for a torus action.
As before, $\GT_d$ denotes the $d$-dimensional complex torus, and we consider its action on $\CC^m$ 
via the matrix $A \in \ZZ^{d \times m}$. We first consider the trivial linearization $b=0$ and later a general linearization $b \in \ZZ^d$.

Fix $v \in \CC^m$ and consider a torus element $\lambda = (\lambda_1, \ldots, \lambda_d)$ in $\GT_d$.
The $j$th coordinate of the vector $\lambda \cdot v \in \CC^m$ is
 \[(\lambda \cdot v)_j = \lambda_1^{a_{1j}} \cdots \lambda_d^{a_{dj}} v_j.\]
Next, we consider the map that sends $\lambda$ to the squared norm of $\lambda \cdot v$:
    \[\begin{split}
    \gamma_v : \GT_d & \to \RR  \\
    \lambda & \mapsto \| \lambda \cdot v \|^2 = \sum_{j=1}^m | \lambda_1|^{2a_{1j}} \cdots |\lambda_d|^{2a_{dj}} {|v_j|}^2 .
    \end{split}\]
The infimum of $\gamma_v$ over $\lambda \in \GT_d$ is the capacity of $v$.

More generally, for an algebraic group $G$ acting linearly on a space $V$ we can consider the map $\gamma_v : G \to \RR$ that sends $g \mapsto \| g \cdot v \|^2$, for fixed $v \in V$. 
The derivative is a map $D_I \gamma_v: T_I G \to \RR$, 
where $T_I G$ is the tangent space to $G$ at $I$.
The \emph{moment map} $\mu$ assigns to $v \in V$ the derivative of the map $\gamma_v$ at $I \in G$.

For the group $\GT_d$, the tangent space at $I$ is equal to $\CC^d$, and the derivative is a map $\CC^d \to \RR$.
Recall that the map $f: \CC \to \CC$, $z \mapsto |z|^2$ is not complex differentiable. We identify $\CC$ with $\RR^2$, writing $z = x + iy$. Then the differential of $f$ is given in terms of $x$ and $y$, and their tangent directions $\dot{x}$ and $\dot{y}$, as $\dot{z} \mapsto 2x\dot{x} + 2y\dot{y}$. In particular, the differential at $z=1$ is the map $\dot{z} \mapsto 2  \Re(\dot{z})$, where $\Re(\cdot)$ denotes the real part of a complex scalar. 
Extending to the multivariate setting, we obtain the derivative map
\[\begin{split}
D_I \gamma_v : \CC^d & \to \RR \\
\dot{\lambda} & \mapsto \sum_{i=1}^d \left( \sum_{j=1}^m 2 a_{ij}  |v_j|^2 \right) \Re(\dot{\lambda_i}) = 2 \Re \left( \sum_{i=1}^d (A v^{(2)})_i \dot{\lambda}_i \right) ,
\end{split}\]
where $v^{(2)}$ is the vector with $j$th coordinate $|v_j|^2$. 
We can identify $\CC^d$ with the space of $\RR$-linear functionals ${\rm Hom}(\CC^d,\RR)$, by associating to $u \in \CC^d$ the map $w \mapsto \Re ( \sum_{i=1}^d u_i w_i )$. Under this identification, the linear map $D_I \gamma_v$ corresponds to the vector $2 A v^{(2)} \in \CC^d$. 
Hence the moment map, for linearization $b=0$, is
    \[ \begin{matrix} \mu : & \CC^m  & \longrightarrow & \CC^d \\ 
    & v & \longmapsto & 2A v^{(2)} .\end{matrix}\]
For a general linearization $b \in \ZZ^d$, we replace the columns $a_j$ of $A$ by $a_j - b$. 
This replaces the vector $A v^{(2)}$ by $A v^{(2)} - b \| v \|^2$, where $\| v \|^2 = \sum_{j=1}^m | v_j|^2$.
We obtain
    \[ \begin{matrix} \mu : & \CC^m  & \longrightarrow & \CC^d \\ 
    & v & \longmapsto & 2(A v^{(2)}  - \| v \|^2 b ).\end{matrix}\]

The Kempf-Ness theorem relates points of minimal norm in an orbit, or orbit closure, to the vanishing of the moment map. It was first  proven in \cite{KempfNess}. Nowadays, several statements are referred to as Kempf-Ness theorem; see \cite[Section~2]{Gaussianpaper} for a summary.
For our torus action, we obtain the following. 

\begin{theorem}[Kempf-Ness theorem for a torus]
\label{thm:kempfNessTorus}
Consider the torus action of $\GT_d$ given by matrix $A \in \ZZ^{d \times m}$ with linearization $b \in \ZZ^d$. A vector is semistable (resp. polystable) if and only if there is a non-zero $v$ in its orbit closure (resp. orbit) with $A v^{(2)} = \| v \|^2 b$.
This $v$ is a vector of minimal norm in the orbit closure (resp. orbit).
\end{theorem}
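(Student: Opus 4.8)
The plan is to exploit a convexity phenomenon: although the torus is non-compact, the squared-norm function along an orbit becomes convex in suitable coordinates, so critical points are automatically global minimizers and the moment map is precisely its gradient at the identity. Since $\| \lambda \cdot w\|^2$ depends only on the moduli $|\lambda_i|$, I would first reduce the complex torus to its positive real part by setting $\lambda_i = e^{t_i}$ and study
\[
F_w(t) \;=\; \|\lambda(t)\cdot w\|^2 \;=\; \sum_{j \in \supp(w)} |w_j|^2\, e^{2\langle a_j - b,\, t\rangle}, \qquad t \in \RR^d,
\]
whose infimum over $t$ equals $\capac(w)$. As a positive combination of exponentials of linear forms, $F_w$ is convex, and differentiating gives $\nabla F_w(0) = 2\big(A w^{(2)} - \|w\|^2 b\big) = \mu(w)$. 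Hence $\mu(w)=0$ is equivalent to $t=0$ being a global minimizer of $F_w$, i.e.\ to $\|w\|^2 = \capac(w)$: a nonzero $w$ with vanishing moment map is exactly a minimal-norm point of its own orbit.

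The second ingredient is a dictionary between the moment-map equation and the polytope $P_w(A)$. Rewriting $A w^{(2)} = \|w\|^2 b$ as
\[
b \;=\; \sum_{j \in \supp(w)} \frac{|w_j|^2}{\|w\|^2}\, a_j,
\]
a convex combination with strictly positive coefficients on exactly the indices in $\supp(w)$, shows $\mu(w)=0 \iff b \in {\rm relint}(P_w(A))$. This makes both backward directions immediate from the Hilbert--Mumford criterion (Theorem~\ref{thm:HMtorus}): if $w \neq 0$ lies in the orbit (resp.\ orbit closure) with $\mu(w)=0$, then $b \in {\rm relint}(P_w(A))$, so $w$ is polystable. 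Since moduli-scaling preserves support and support can only drop in a limit, $\supp(w) \subseteq \supp(v)$, giving $P_w(A) \subseteq P_v(A)$; thus $b \in P_v(A)$ and $v$ is semistable, while in the orbit case $G\cdot v = G\cdot w$ is closed and $v$ is polystable.

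For the forward directions I would construct the minimizer from convexity. In the polystable case Theorem~\ref{thm:HMtorus} gives $b \in {\rm relint}(P_v(A))$; restricting $F_v$ to the direction space $W$ of the affine hull of $P_v(A)$, the relative-interior condition forces $\max_{j\in\supp(v)} \langle a_j - b, u\rangle > 0$ for every $0\neq u \in W$ (else the polytope lies in a closed halfspace touching $b$, contradicting $b \in {\rm relint}$), so $F_v$ is coercive on $W$ and attains its infimum at some $t^\ast$; then $w = \lambda(t^\ast)\cdot v$ lies in the orbit with $\mu(w)=\nabla F_v(t^\ast)=0$ and minimal norm. In the semistable case the infimum need not be attained in the orbit, so instead I take a minimizing sequence $t^{(k)}$: the images $\lambda(t^{(k)})\cdot v$ have norms tending to $\capac(v)>0$, hence are bounded and, along a subsequence, converge to some $w \in \overline{G\cdot v}$ with $\|w\|^2 = \capac(v) > 0$. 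The inclusion $\overline{G\cdot w}\subseteq\overline{G\cdot v}$ forces $\capac(v) \le \capac(w) \le \|w\|^2 = \capac(v)$, so $\capac(w)=\|w\|^2$ and $\mu(w)=0$ by the convexity criterion, and this $w$ realizes the minimal norm over the orbit closure.

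The main obstacle is the forward direction, namely producing an actual minimizer rather than merely an infimum: the torus orbit is non-compact, so one must either establish coercivity on the correct subspace (polystable) or pass to the orbit closure via compactness and then identify the limit as a moment-map zero through the capacity comparison $\capac(v)=\capac(w)=\|w\|^2$ (semistable). The convexity of $F_w$ is what upgrades ``critical point'' to ``global minimizer'' and underlies every step; the delicate point is the semistable-but-not-polystable case, where the minimum escapes the orbit and lands on a lower-dimensional orbit in the boundary.
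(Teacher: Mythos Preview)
Your argument is correct and more self-contained than either of the paper's two proofs. The paper's first proof is essentially a citation, translating the torus case from the original Kempf--Ness article; its second proof invokes the Atiyah/Guillemin--Sternberg moment-polytope identity $\tilde\mu(\GT_d\cdot v)=\mathrm{relint}\,P_v(A)$ as a black box for the polystable forward direction, and then appeals to the existence of a unique closed orbit in each orbit closure to reduce the semistable case to the polystable one. You replace both black boxes with elementary convex analysis: the polystable forward direction comes from coercivity of the log-convex function $F_v$ restricted to the direction space of the affine hull (which is precisely a direct proof of the relevant piece of the moment-polytope theorem), and the semistable forward direction comes from a minimizing-sequence compactness argument together with the capacity sandwich $\capac(v)\le\capac(w)\le\|w\|^2=\capac(v)$, avoiding any structure theory of orbit closures. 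The paper does record the convex geometric-programming reformulation later (Section~\ref{sec:scaling}), but does not exploit it to prove the theorem, so your route is a genuine alternative that stays entirely within Theorem~\ref{thm:HMtorus} and undergraduate analysis. One small slip: the claimed equivalence ``$\mu(w)=0\iff b\in\mathrm{relint}(P_w(A))$'' is only the implication $\Rightarrow$ for a \emph{fixed} $w$ (the converse says merely that \emph{some} $w'$ with the same support satisfies $\mu(w')=0$); fortunately you only ever use the correct direction.
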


We give two proofs of  Theorem~\ref{thm:kempfNessTorus} in Appendix~\ref{sec:appendixKN}. The first proof is a translation from the original paper of Kempf and Ness~\cite{KempfNess}; the second proof uses Theorem~\ref{thm:HMtorus}.

\subsection{The null cone}

The set of unstable points under a group action on a vector space is the null cone, see Definition~\ref{def:StabilityNotions}. In many settings of interest, the null cone is a Zariski closed set, the vanishing locus of all homogeneous invariants of positive degree.
It is a classical object of interest, studied by Hilbert~\cite{hilbert1890ueber}.

We recall the setting of the action of the complex torus $\GT_d$ on $\CC^m$ given by a matrix $A \in \ZZ^{d \times m}$ with linearization $b \in \ZZ^d$. The stability of $v \in \CC^m$ is determined by its support $\supp(v)$; see Theorem~\ref{thm:HMtorus}. In particular the null cone, as a set, is a union of coordinate linear spaces. 
We describe it in terms of the standard basis vectors in $\CC^m$, denoted $e_1, \ldots, e_m$.
The linear space spanned by $\{ e_j : j \in J \}$ is denoted $\langle e_j : j \in J \rangle$. 

A vector $b$ in $P(A)$ can be written as a convex combination of the $m$ columns of $A$. We consider the maximal sub-polytope of $P(A)$ that does not contain $b$, as well as the minimal sub-polytope of $P(A)$ that contains $b$. Both minimality and maximality are taken with respect to inclusion in the set $[m]$.

In Proposition~\ref{prop:null_cone_linear_spaces}, we see the connection between irreducible components of the null cone and maximal sub-polytopes of $P(A)$ not containing $b$, see Figure~\ref{fig:max_not_containing_b}. Then, in Proposition~\ref{prop:vanishing_monomials}, we see that minimal sub-polytopes containing $b$ give set-theoretic defining equations for the null cone, see Figure~\ref{fig:minimal_containing_b}.

\begin{proposition}
\label{prop:null_cone_linear_spaces}
Consider the action of $\GT_d$ on $\CC^m$ given by matrix $A \in \ZZ^{d \times m}$ with linearization $b \in \ZZ^d$. 
The irreducible components of the null cone are the linear spaces 
$\langle e_j : j \in J \rangle$, where $P_J(A)$ is a maximal sub-polytope of $P(A)$ with $b \notin P_J(A)$.
\end{proposition}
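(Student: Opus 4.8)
The plan is to reduce everything to the Hilbert-Mumford criterion and then to a combinatorial bookkeeping of supports. By Theorem~\ref{thm:HMtorus}(a), a vector $v$ is unstable, i.e. lies in the null cone $\NC$, precisely when $b \notin P_v(A)$. Since $P_v(A) = \conv\{a_j : j \in \supp(v)\}$ depends only on the support of $v$, membership in $\NC$ is governed entirely by $\supp(v)$. The first step is therefore to record the monotonicity: if $J \subseteq J'$ then $P_J(A) \subseteq P_{J'}(A)$, so the family of ``bad'' index sets $\mathcal{B} := \{J \subseteq [m] : b \notin P_J(A)\}$ is closed under passing to subsets.

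Next I would establish the set-theoretic description $\NC = \bigcup \langle e_j : j \in J\rangle$, where the union runs over the maximal elements $J$ of $\mathcal{B}$. For the inclusion ``$\supseteq$'': if $J \in \mathcal{B}$ and $\supp(v) \subseteq J$, then $P_v(A) \subseteq P_J(A) \not\ni b$, so $v$ is unstable. For ``$\subseteq$'': if $v \in \NC$ then $\supp(v) \in \mathcal{B}$, and because $\mathcal{B}$ is downward closed, $\supp(v)$ extends to some maximal bad set $J$, whence $v \in \langle e_j : j \in J\rangle$. In particular this exhibits $\NC$ as a finite union of coordinate subspaces, hence as a Zariski closed set.

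Finally I would upgrade this covering to the statement about irreducible components. Each coordinate subspace $\langle e_j : j \in J\rangle$ is a linear space, hence irreducible, so it remains only to check irredundancy. If two maximal bad sets satisfy $\langle e_j : j \in J_1\rangle \subseteq \langle e_j : j \in J_2\rangle$, then $J_1 \subseteq J_2$; since both are maximal in $\mathcal{B}$ this forces $J_1 = J_2$. Thus the union is irredundant, and an irredundant finite union of irreducible closed sets is exactly its decomposition into irreducible components.

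The only point requiring care --- the main, if modest, obstacle --- is the passage from a covering by irreducible closed sets to the claim that these are \emph{the} irreducible components: one must verify both irreducibility of each piece and the absence of inclusions, the latter being precisely the maximality of the corresponding sub-polytopes. Everything else is a direct translation of Theorem~\ref{thm:HMtorus} together with the monotonicity of $P_J(A)$ in $J$.
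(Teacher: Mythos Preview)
Your proof is correct and follows essentially the same approach as the paper's: both reduce to Theorem~\ref{thm:HMtorus}(a) and the monotonicity of $P_J(A)$ in $J$ to identify the null cone set-theoretically with the union of the coordinate subspaces indexed by maximal $J$ with $b \notin P_J(A)$. You are more explicit than the paper about the passage from this covering to the irreducible decomposition (irreducibility of each coordinate subspace plus irredundancy via maximality), which the paper's short proof leaves implicit.
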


\begin{proof}
Assume that a point $v \in \CC^m$ lies in a linear space $\langle e_j : j \in J \rangle$ where $b \notin P_J(A)$. Then $\supp(v) \subseteq J$, hence $b \notin P_v(A)$, and $v$ is unstable by Theorem~\ref{thm:HMtorus}(a). Conversely, assume that $v \in \CC^m$ is not contained in any linear space $\langle e_j : j \in J \rangle$ as in the statement. Since the $P_J(A)$ are maximal with $b \notin P_J(A)$, we have $b \in P_v(A)$ and $v$ is semistable.
\end{proof}

\begin{figure}[htbp]
    \centering
    \includegraphics[width=3cm]{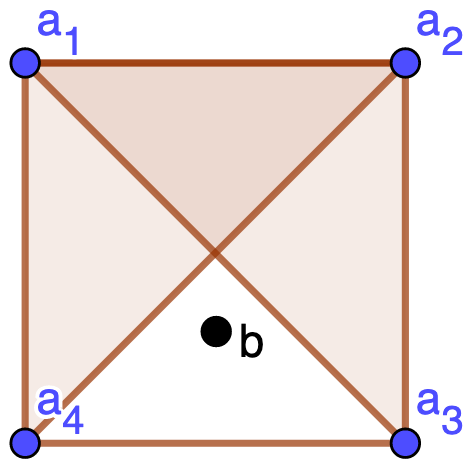} \qquad 
    \includegraphics[width=3cm]{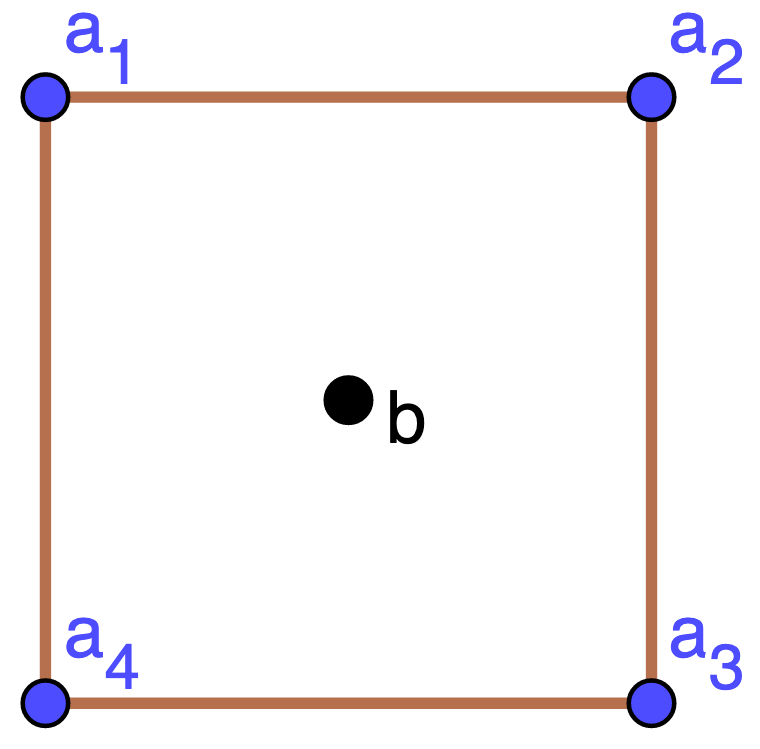}
    \qquad
    \includegraphics[width=2.9cm]{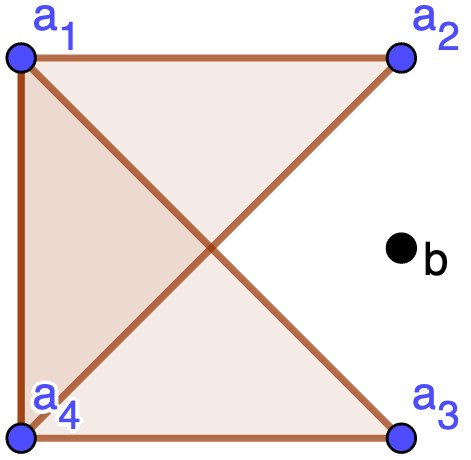}
    \qquad
    \includegraphics[width=3cm]{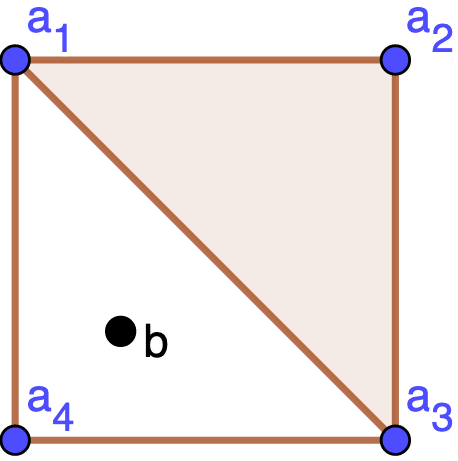}
    \caption{The maximal sub-polytopes of $P(A)$ not containing $b$, for four different $b \in \ZZ^2$.
    For example, the leftmost picture displays $P_J(A)$ for $J$ equal to $\{1,2,3\}$, $\{1, 2, 4\}$, and $\{3, 4\}$.
    Each sub-polytope corresponds to an irreducible component of the null cone, see Proposition~\ref{prop:null_cone_linear_spaces}. For $b$ on the boundary of $P(A)$, all maximal sub-polytopes intersect, see Proposition~\ref{prop:intersectIrredComp}.}
    \label{fig:max_not_containing_b}
\end{figure}

\begin{proposition} 
\label{prop:vanishing_monomials}
Consider the action of $\GT_d$ on $\CC^m$ given by matrix $A \in \ZZ^{d \times m}$ with linearization $b \in \ZZ^d$. 
A vector $v \in \CC^m$ is in the null cone if and only if all products $\prod_{j \in J} v_j$ vanish, where $J \subseteq [m]$ indexes a minimal sub-polytope of $P(A)$ containing $b$.
\end{proposition}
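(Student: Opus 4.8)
The plan is to reduce both sides of the claimed equivalence to conditions on $\supp(v)$ and then prove a purely combinatorial statement about the polytope $P(A)$. By Theorem~\ref{thm:HMtorus}(a), the vector $v$ lies in the null cone if and only if $b \notin P_v(A) = P_{\supp(v)}(A)$. On the other side, for any index set $J \subseteq [m]$ the product $\prod_{j \in J} v_j$ vanishes precisely when $J \not\subseteq \supp(v)$. Hence the condition that \emph{all} such products vanish, ranging over the minimal sub-polytopes of $P(A)$ containing $b$, translates into: no minimal index set $J$ (one with $b \in P_J(A)$ and no proper subset having this property) satisfies $J \subseteq \supp(v)$.

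Writing $S := \supp(v)$, it therefore suffices to establish the equivalence
\[ b \in P_S(A) \quad \Longleftrightarrow \quad \exists \text{ minimal } J \text{ with } b \in P_J(A) \text{ and } J \subseteq S, \]
since negating both sides and combining with the Hilbert--Mumford description yields exactly the proposition. The backward implication is immediate: if $J \subseteq S$ then $P_J(A) \subseteq P_S(A)$, so $b \in P_J(A)$ forces $b \in P_S(A)$.

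For the forward implication I would argue as follows. If $b \in P_S(A)$, then $S$ itself belongs to the nonempty collection of subsets $J' \subseteq S$ with $b \in P_{J'}(A)$; choose $J$ to be inclusion-minimal within this collection. The one point requiring care is that such a $J$, minimal among subsets of $S$, is automatically minimal among \emph{all} subsets of $[m]$. Indeed, any proper subset $J'' \subsetneq J$ with $b \in P_{J''}(A)$ would satisfy $J'' \subsetneq J \subseteq S$, hence $J''$ would again lie in the collection, contradicting the minimality of $J$ within $S$. Thus $J$ indexes a minimal sub-polytope containing $b$ and $J \subseteq S$, which is what we wanted.

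I do not anticipate a genuine obstacle in this argument: once the Hilbert--Mumford criterion is invoked, everything is formal. The only subtle point is the compatibility of minimality within $S$ and within $[m]$ noted above, which is a one-line observation rather than a difficulty. Assembling the chain, $v$ is unstable iff $b \notin P_S(A)$, iff every minimal $J$ fails to be contained in $S$, iff every corresponding monomial $\prod_{j \in J} v_j$ vanishes, giving the desired set-theoretic description of the null cone.
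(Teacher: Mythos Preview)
Your proof is correct and follows essentially the same route as the paper's: both reduce via Theorem~\ref{thm:HMtorus} to the combinatorial equivalence $b \in P_{\supp(v)}(A) \Leftrightarrow$ some minimal $J$ lies inside $\supp(v)$, and both directions are argued identically. If anything, you spell out more carefully than the paper why a $J$ chosen inclusion-minimal within $S$ is automatically minimal within all of $[m]$.
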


\begin{proof}
Denote $v_J := \prod_{j \in J} v_j$. If some $v_J$ is non-zero, i.e. $J \subseteq \supp(v)$, then $b \in P_J(A)$ implies $b \in P_v(A)$, hence $v$ is semistable by Theorem~\ref{thm:HMtorus}(b). Conversely, if $v$ is semistable then $b \in P_v(A)$. By minimality of the minimal sub-polytopes $P_J(A)$ containing $b$ we have, for some $J$ in the statement, the containment $P_J(A) \subseteq P_v(A)$, i.e. $J \subseteq \supp(v)$, hence $v_J \neq 0$. 
\end{proof}

\begin{figure}[htbp]
    \centering
    \includegraphics[width=3cm]{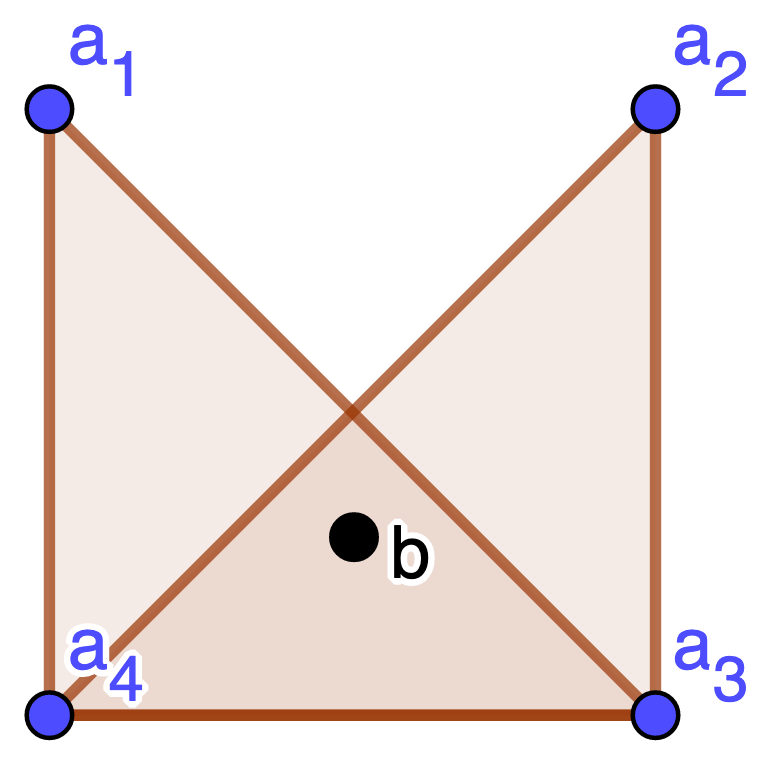} \qquad 
    \includegraphics[width=3cm]{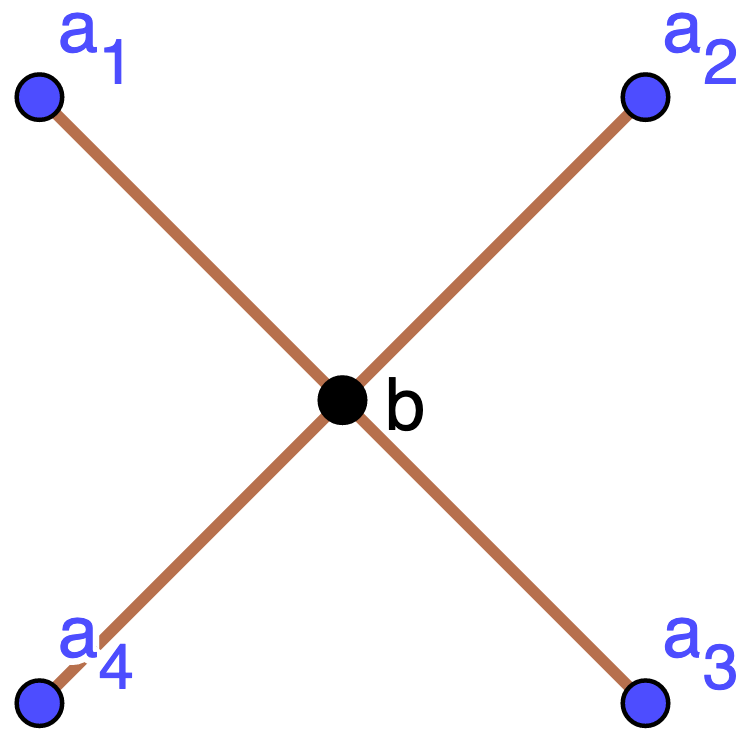}
    \qquad 
    \includegraphics[width=3cm]{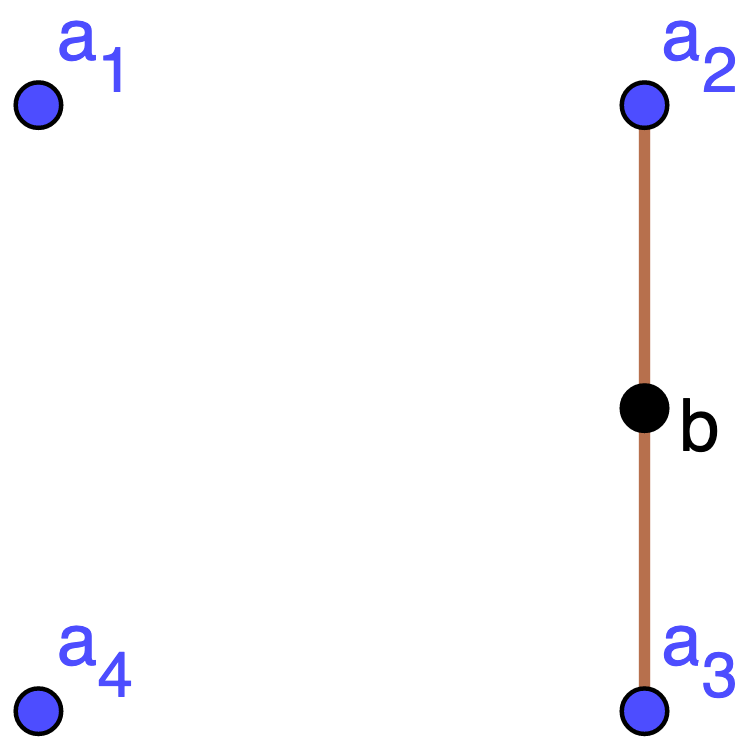}
        \qquad 
    \includegraphics[width=3cm]{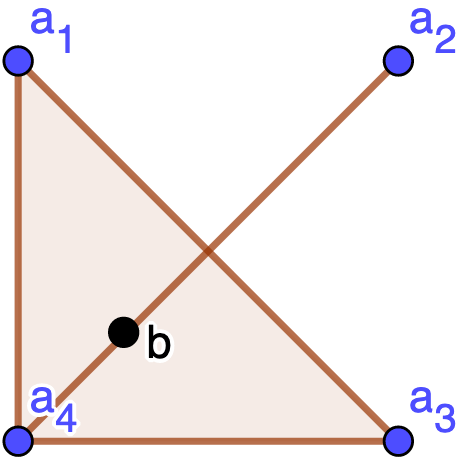}
    \caption{The minimal sub-polytopes of $P(A)$ containing $b$, for four choices of $b \in \ZZ^2$.
    For example, the leftmost picture displays $P_J(A)$ for $J$ equal to $\{1,3,4\}$ and $\{2,3, 4\}$.
    Each sub-polytope corresponds to a generator of the null cone, see Proposition~\ref{prop:vanishing_monomials}.}
    \label{fig:minimal_containing_b}
\end{figure}

The null cone is defined by the vanishing of all homogeneous invariants of positive degree. The monomials from Proposition~\ref{prop:vanishing_monomials} give the square-free part of the generators of the null cone. We describe how to take powers of the indeterminates appearing in the monomials, in order to turn them into invariants.
Let $J \subseteq [m]$ index a minimal sub-polytope of $P(A)$ containing $b$. Then $0$ can be written as a strictly positive convex combination of 
$\{ (a_j -b) : j \in J \}$.
Since the entries of the matrix $A$ and the vector $b$ are integers, the convex combination is rational. Multiplying by the lowest common denominator gives a positive integer linear combination
\begin{equation}
\label{eqn:convex_zero}
     \sum_{j \in J} r_j (a_j -b) = 0 ,  \qquad r_j \in \ZZ_{>0}.
\end{equation}
The monomials $\prod_{j \in J} x_j^{r_j}$ are invariants under the group action, since
    \[ \lambda \cdot \left( \prod_{j \in J} x_j^{r_j} \right) = \prod_{j \in J} \left( \lambda^{-(a_j - b)} x_j \right)^{r_j} = \prod_{j \in J} x_j^{r_j} \cdot \lambda^{- \sum_{j \in J} r_j (a_j - b)} =  \prod_{j \in J} x_j^{r_j} , \]
where the first equality follows from Remark~\ref{rmk:realnullcone} and the last equality follows from~\eqref{eqn:convex_zero}.

\section{Main Results}
\label{sec:loglinear}

We begin this section by introducing maximum likelihood estimation in log-linear models, pointing out connections to torus actions.
We relate stability under a torus action to maximum likelihood estimation for log-linear models in Section~\ref{sec:stability_MLE}, and describe how to compute the MLE from the moment map in Section~\ref{sec:torusmoment}.

\subsection{Log-linear models}

A log-linear model consists of distributions whose logarithms lie in a fixed linear space. 
The log-linear model corresponding to a matrix  $A \in \ZZ^{d \times m}$ is
 \begin{equation}
     \label{eq:logLinearModel}
 \mathcal{M}_A = \{ p \in \Delta_{m-1} \, \vert \, \log p \in \rsp(A) \}.
 \end{equation}
 The coordinatewise logarithm $\log p$ applies to $p$ with strictly positive entries, and we therefore have $\mathcal{M}_A \subseteq {\rm relint}(\Delta_{m-1})$.
A parametrization of the model $\mathcal{M}_A$ is given by
\begin{equation}
\label{eq:toricparam}
\begin{matrix} \phi^A: & \RR_{>0}^d & \longrightarrow & \Delta_{m-1} \\
    & \theta & \longmapsto &  \left( \frac{1}{Z(\theta)} \prod_{i=1}^d \theta_i^{a_{ij}} \right)_{1 \leq j \leq m} \end{matrix}
\end{equation}
where $Z$ is a normalization factor. We observe a first connection between the statistical model and a torus action: the map $\phi^A$ is, up to normalization, the action~\eqref{eq:torusAction} of the real positive torus element $\theta$ on the all-ones vector $\ones  = (1,\dots,1) \in \RR^m$ with trivial linearization $b = 0$. 

For the log-linear model $
\mathcal{M}_A$, we assume that the vector $\ones$ is in the row span of $A$; this is
a common assumption for statistical, as well as algebraic, reasons. First, such log-linear models are equivalent to discrete exponential families \cite[Section 6.2]{Sullivant}. Second, the assumption means the uniform distribution $\frac{1}{m}\ones$ is in the model. Moreover, consider the  Zariski closure of~$\mathcal{M}_A$ in $\CC^m$, defined by the ideal
\begin{equation}
\label{eq:ZarMA}
    I_A = \langle p^v - p^w \, \vert \, v,w \in \ZZ_{\geq 0}^m \text{ such that } Av = Aw \rangle 
\end{equation}
in the ring $\CC[p_1,\dots,p_m]$, where $p^v := \prod_{j=1}^m p_j^{v_j}$ for $v \in \ZZ_{\geq 0}^m$.
If $\ones \in \rsp(A)$, this becomes a homogeneous ideal: indeed, if $r\T A = \ones $ for some $r \in \RR^d$ then multiplying $Av = Aw$ by this vector results in $\ones v = \ones w$. 

\begin{example}
\label{ex:indep}
A probability distribution on two ternary random variables is a $3 \times 3$ matrix $p = (p_{ij})$ of non-negative entries that sum to one. A distribution lies in the independence model if
    \[ p_{ij} = p_{i+} p_{+j} \quad \text{ for all } \quad 1 \leq i,j \leq 3,\]
where $p_{i+}$ is the sum of the $i$th row of $p$, and $p_{+j}$ the sum of the $j$th column. 
We view the independence model as a discrete exponential family, hence require that the entries of $p$ are strictly positive.
The independence model on a pair of ternary random variables is the log-linear model $\Mcal_A$ where
\[ A = \begin{bmatrix} 1 & & & 1 & & & 1 & & \\ & 1 & & & 1 & & & 1 & \\ & & 1 & & & 1 & & & 1 \\ 1 & 1 & 1 & & & & & & \\ & & & 1 & 1 & 1 & & & & \\ & & & & & & 1 & 1 & 1 \\
\end{bmatrix}  \in \ZZ^{6 \times 9}. \]
A distribution $p \in \Mcal_A$ has nine states, i.e. $\Mcal_A \subseteq \Delta_8$. We identify $\RR^9$ with $\RR^{3 \times 3}$ to view the nine state random variable as a pair of ternary random variables.

The action of $\GT_6(\RR)$ on $\RR^{3 \times 3}$ given by~\eqref{eq:torusd}, is as follows. The torus element 
    \[ \left( \begin{matrix} \nu_1 & \nu_2 & \nu_3 & \nu_4 & \nu_5 & \nu_6 \end{matrix} \right) = \left( \begin{matrix} \lambda_1 & \lambda_2 & \lambda_3 & \mu_1 & \mu_2 & \mu_3 \end{matrix} \right) \]
acts on a matrix $x \in \RR^{3 \times 3}$ by multiplying the entry $x_{ij}$ by $\prod_{k = 1}^6 \nu_k^{a}$ where $a$ is the column of $A$ with  index $(i,j)$. 
This is the left-right action of $\GT_3 \times \GT_3$ on the space of $3 \times 3$ matrices; it sends $x_{ij} \mapsto \lambda_i \mu_j x_{ij}$.
In particular, the orbit of the all-ones matrix consists of all rank one matrices with all entries non-zero. The intersection of this orbit with $\Delta_8$ gives the independence model on a pair of ternary variables. \hfill\exSymbol
\end{example}

We now consider maximum likelihood estimation for log-linear models. 
Recall that an observed vector of counts $u \in \ZZ^{m}_{\geq 0}$ defines an empirical distribution~$\bar{u} \in \Delta_{m-1}$. 
The vector $A\bar{u}$ is a vector of sufficient statistics for the model $\mathcal{M}_A$.
A maximum likelihood estimate is a point $q \in \mathcal{M}_A$ such that
\begin{equation}\label{eq:Birch}
Aq = A\bar{u},
\end{equation}
see e.g. \cite[Proposition 2.1.5]{owl} or \cite[Corollary 7.3.9]{Sullivant}.

Since the model $\Mcal_A$ is not closed, the MLE may not exist. Birch \cite{birch1963} was the first to rigorously study MLE existence in the context of multi-way tables, where he observed that $u$ having all entries strictly positive is a sufficient condition for the MLE to exist and derived condition \eqref{eq:Birch}, sometimes known as Birch's Theorem, 
see~\cite[Theorem 1.10]{ASCB}. The fact that some entries could still be $0$ without affecting MLE existence was not fully understood until the work of Haberman, who gave the first characterization of MLE existence in her paper~\cite{haberman1974}. A modern necessary and sufficient condition is the following.

\begin{proposition}[\!\!{\cite[Theorem 8.2.1]{Sullivant}}]
\label{prop:relativeInt}
Let $A \in \ZZ^{d \times m}$ be such that $\ones \in \rsp(A)$ and let $\mathcal{M}_A$ be the corresponding log-linear model. Suppose we observe a vector of counts $u \in \ZZ^m_{\geq 0}$. Then the MLE given $u$ exists in $\mathcal{M}_A$ if and only if $A \bar{u}$  lies in the relative interior of
the polytope $P(A)$.
\end{proposition}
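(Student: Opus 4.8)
The plan is to deduce the statement from the stability dictionary already in hand, applied to the all-ones vector $\ones$ with the linearization $b := A\bar u$. One preliminary point I would settle first: $A\bar u = \tfrac1n A u$ is only rational, not an integer character, so I would record that Theorems~\ref{thm:HMtorus} and~\ref{thm:kempfNessTorus} hold verbatim for rational (indeed real) $b$, their content being the position of $b$ relative to the real polytope $P_v(A)$ and the moment-map equation $A v^{(2)} = \Vert v \Vert^2 b$, neither of which uses $b \in \ZZ^d$; integrality is needed only for the line-bundle interpretation.

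I would dispatch the direction ``MLE exists $\Rightarrow$ $A\bar u \in {\rm relint}(P(A))$'' first, as it is immediate. If an MLE $q$ exists then $q \in \mathcal{M}_A \subseteq {\rm relint}(\Delta_{m-1})$, so all $q_j > 0$ and, by Birch's equation~\eqref{eq:Birch}, $A\bar u = Aq = \sum_{j=1}^m q_j a_j$ is a strictly positive convex combination of every column of $A$; such a combination lies in the relative interior of their convex hull $P(A)$.

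For the converse I would assume $A\bar u \in {\rm relint}(P(A))$ and run the $\GT_d$-action given by $A$ with linearization $b = A\bar u$. Since $\supp(\ones) = [m]$ we have $P_\ones(A) = P(A)$, so Theorem~\ref{thm:HMtorus}(c) makes $\ones$ polystable, and Theorem~\ref{thm:kempfNessTorus} then supplies a non-zero $v$ in the orbit $\GT_d \cdot \ones$ with $A v^{(2)} = \Vert v \Vert^2 b$. I would set $q := v^{(2)} / \Vert v \Vert^2$, so that $q_j \ge 0$, $\sum_j q_j = 1$, and $Aq = b = A\bar u$; thus $q$ already satisfies~\eqref{eq:Birch}, and it remains only to place $q$ inside the model.

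The crux, and the step I expect to be the main obstacle, is verifying $q \in \mathcal{M}_A$, i.e.\ that $q > 0$ entrywise and $\log q \in \rsp(A)$. Because $v$ lies in the \emph{orbit} of $\ones$, equation~\eqref{eq:torusAction} gives $v_j = \prod_{i} \lambda_i^{a_{ij}-b_i}$ for some $\lambda \in (\CC^\ast)^d$, so every $v_j \neq 0$ and hence $q_j > 0$. Taking logarithms, $\log q_j = 2\sum_i a_{ij}\log|\lambda_i| - \kappa$ with $\kappa = 2\sum_i b_i \log|\lambda_i| + \log\Vert v\Vert^2$ independent of $j$; the first term is the $j$th entry of $A\T c$ for $c_i = 2\log|\lambda_i|$ and so lies in $\rsp(A)$, while the constant term contributes $-\kappa\ones$, which lies in $\rsp(A)$ precisely by the standing hypothesis $\ones \in \rsp(A)$. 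Hence $\log q \in \rsp(A)$, $q \in \mathcal{M}_A$, and $q$ is the sought MLE. It is exactly this last bookkeeping — turning the Kempf--Ness norm-minimizer into a bona fide model point, where the hypothesis $\ones \in \rsp(A)$ is indispensable — together with the rational-$b$ remark, that I regard as the delicate part of the argument.
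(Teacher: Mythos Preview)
The paper does not prove Proposition~\ref{prop:relativeInt} at all: it is quoted from Sullivant's book and used as an \emph{input} to Theorem~\ref{thm:MLEpolystableTorus}. Your argument is therefore not a reconstruction of the paper's proof but a genuinely different route, and a correct one.

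What you have done, in effect, is invert the paper's logical flow. The paper takes the statistical fact (Proposition~\ref{prop:relativeInt}) and the Hilbert--Mumford criterion (Theorem~\ref{thm:HMtorus}) and combines them to obtain the stability/MLE dictionary of Theorem~\ref{thm:MLEpolystableTorus}; it then separately, in Theorem~\ref{thm:MLEviaMomentMap}, shows how to read off the MLE from a Kempf--Ness minimizer, checking membership in $\overline{\mathcal{M}_A}$ via the toric ideal~\eqref{eq:ZarMA} and the equality of Euclidean and Zariski closures. You instead start from Theorems~\ref{thm:HMtorus} and~\ref{thm:kempfNessTorus} alone and deduce Proposition~\ref{prop:relativeInt} directly, verifying $q \in \mathcal{M}_A$ by the parametric description~\eqref{eq:torusAction} rather than the implicit equations. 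This is cleaner for the polystable case and makes transparent exactly where the hypothesis $\ones \in \rsp(A)$ enters (absorbing the constant $-\kappa\ones$ into the row span). The trade-off is that your argument still consumes Birch's characterization~\eqref{eq:Birch} as a black box, so you have replaced one cited statistical input by another; the paper's route keeps both external.

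One cosmetic point: you can avoid the rational-$b$ caveat entirely by working, as the paper does in Theorems~\ref{thm:MLEpolystableTorus} and~\ref{thm:MLEviaMomentMap}, with the action given by matrix $nA$ and integer linearization $Au$; since $P(nA) = nP(A)$, the relative-interior condition is unchanged.
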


In particular, we see that, indeed, if $u$ has all entries positive, the MLE always exists. However, if $u$ has some entries zero, the MLE may or may not exist.

Following \cite[Section 4.2.3]{Lauritzen}, we define the \textit{extended log-linear model} $\overline{\mathcal{M}_A}$  to be the closure of $\mathcal{M}_A$ in the Euclidean topology on $\RR^m$. The extended model allows for distributions that have some zero coordinates. 
The MLE always exists for the extended model, because it is compact and the likelihood function is continuous.
If the MLE given $u$ does not exist in $\mathcal{M}_A$, we refer to the MLE given $u$ in the extended model $\overline{\mathcal{M}_A}$ as the \emph{extended MLE} given $u$. 

Since the likelihood function \eqref{eq:likelihoodiscrete} is strictly concave for log-linear models, the MLE is unique if it exists, and similarly for the extended MLE.

\subsection{Relating stability to the MLE}
\label{sec:stability_MLE}

We now describe connections between existence of the MLE for a log-linear model and stability under a torus action.

\begin{theorem}
\label{thm:MLEpolystableTorus}
Consider a vector of counts $u \in \ZZ^{m}_{\geq 0}$ with sample size $u_+ = n$, matrix $A \in \ZZ^{d \times m}$ with $\ones \in \CC^m$ in the rowspan, and vector $b = Au \in \ZZ^d$. 
The stability under the action of the complex torus $\GT_d$ given by matrix $n A$  with linearization $b$ is related to ML estimation in $\Mcal_A$ as follows.
\[\begin{matrix}
(a) & \ones \text{ unstable} &  & \text{does not happen} \\
(b) & \ones \text{ semistable} & \Leftrightarrow & \text{extended MLE exists and is unique} \\
(c) & \ones \text{ polystable} &  \Leftrightarrow & \text{MLE exists and is unique} \\
(d) & \ones \text{ stable} &  & \text{does not happen}
\end{matrix}
\]
\end{theorem}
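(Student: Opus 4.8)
The plan is to read off all four rows of the table from the Hilbert--Mumford criterion (Theorem~\ref{thm:HMtorus}) applied to the action of $\GT_d$ given by the matrix $nA$ with linearization $b = Au$, evaluated at the single vector $v = \ones$. The decisive first step is to note that $\supp(\ones) = [m]$, so the relevant sub-polytope is the full polytope of the scaled matrix: $P_{\ones}(nA) = \conv\{n a_1, \dots, n a_m\} = n\,P(A)$. Combining this with the identity $b = Au = nA\bar u$ and cancelling the common factor $n$ (using $\operatorname{int}(nP) = n\operatorname{int}(P)$ and $\operatorname{relint}(nP) = n\operatorname{relint}(P)$), each geometric condition in Theorem~\ref{thm:HMtorus} becomes a statement about the location of $A\bar u$ relative to $P(A)$: unstable $\Leftrightarrow A\bar u \notin P(A)$, semistable $\Leftrightarrow A\bar u \in P(A)$, polystable $\Leftrightarrow A\bar u \in \operatorname{relint}(P(A))$, and stable $\Leftrightarrow A\bar u \in \operatorname{int}(P(A))$.

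For (a) and (b) I would invoke the description of $P(A)$ as the set of all $Au'$ with $u' \in \Delta_{m-1}$. Since $\bar u \in \Delta_{m-1}$, we always have $A\bar u \in P(A)$; hence $\ones$ is never unstable and always semistable, which settles the left-hand side of both rows. On the statistical side, the extended model $\overline{\Mcal_A}$ is compact and the likelihood is continuous, so the extended MLE always exists, and strict concavity gives uniqueness. Thus both sides of (b) are always true (a vacuous equivalence) and both sides of (a) are always false.

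Part (c) is the substantive content, and here I would call directly on Proposition~\ref{prop:relativeInt}: the MLE exists exactly when $A\bar u \in \operatorname{relint}(P(A))$, which is precisely the polystability condition obtained above, with uniqueness again following from strict concavity. For (d) the task is to rule out stability, and this is where the hypothesis $\ones \in \rsp(A)$ is essential: choosing $r$ with $r\T A = \ones$ forces $r\T a_j = 1$ for every column, so $P(A)$ is contained in the proper affine hyperplane $\{x : r\T x = 1\} \subseteq \RR^d$ (with $r \neq 0$ since $\ones \neq 0$). A subset of a proper affine subspace has empty interior, so $\operatorname{int}(P(A)) = \emptyset$ and $\ones$ is never stable, mirroring the statistical fact that the existence criterion never passes beyond the relative interior.

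The step I expect to demand the most care is the bookkeeping around the scaling by $n$ together with the hyperplane argument in (d): one must verify that the factor $n$ passes cleanly through $\operatorname{int}$ and $\operatorname{relint}$ and cancels against $b = nA\bar u$, and that containment of $P(A)$ in a hyperplane genuinely forces an empty \emph{interior} (as opposed to relative interior). The remaining routine point is to confirm that the uniqueness assertions in (b) and (c) are supplied by strict concavity rather than by the invariant-theoretic machinery.
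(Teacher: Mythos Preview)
Your proposal is correct and follows essentially the same route as the paper's proof: both apply the Hilbert--Mumford criterion (Theorem~\ref{thm:HMtorus}) at $v=\ones$, invoke Proposition~\ref{prop:relativeInt} for part~(c), observe $b=Au\in P(nA)$ to rule out instability, and use the hyperplane argument from $\ones\in\rsp(A)$ to rule out stability. Your version is slightly more explicit about the scaling by $n$ and about sourcing uniqueness from strict concavity, but there is no substantive difference in strategy.
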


\begin{proof}
We refer to the conditions for the different notions of stability, coming from the Hilbert-Mumford criterion in Theorem~\ref{thm:HMtorus}.
By Proposition~\ref{prop:relativeInt}, the MLE of $u$ exists if and only if 
$b$ lies in the relative interior of the polytope $P(nA)$, which is the condition for polystability in Theorem~\ref{thm:HMtorus}.

It remains to see that the cases of unstable and stable do not occur. The all-ones vector $\ones$ can never be unstable with respect to the action in Theorem~\ref{thm:MLEpolystableTorus}, because $b = Au$ is in the polytope $P(nA)$.
Finally, the stable case also cannot arise, due to the assumption that the vector $\ones$ lies in the row span of $A$, as follows. 
Writing $\ones$ as a linear combination of the rows, i.e. $r\T A = \ones$,
we have that all vectors $a_j$ lie on the hyperplane $r_1 x_1 + \cdots + r_d x_d = 1$ and the polytope $P(A)$ has empty interior in $\RR^d$.
\end{proof}

We remark that we could take any other vector of full support in Theorem~\ref{thm:MLEpolystableTorus}. 
The theorem shows that MLE existence can be tested by checking polystability under the group action. We now give alternative characterizations that involve semistability, which has advantages over polystability. The semistability of $v$ can be checked by evaluating generators of the null cone at $v$. If all generators vanish then $v$ is unstable, otherwise it is semistable. 
 
 \begin{proposition}
\label{prop:intersectNullCones}
For a vector of counts $u \in \ZZ^m_{\geq 0}$ with $u_+ = n$  and $A \in \ZZ^{d \times m}$, the MLE given $u$ exists if and only if there is some $b \in \ZZ^d$, of the form $b = Av$ for $v \in \RR_{>0}^m$, such that $u$ is semistable for the torus action given by matrix $n A$ with linearization $b$. 
\end{proposition}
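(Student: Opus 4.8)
The plan is to translate both sides of the equivalence into conditions on the polytope $P(A)$ and its subpolytope $P_u(A) = \conv\{a_j : j \in \supp(u)\}$, and then match them using elementary convex geometry. Throughout I use the standing assumption $\ones \in \rsp(A)$; I fix $r \in \RR^d$ with $r\T A = \ones$, so that every column satisfies $r\T a_j = 1$ and $P(A)$ lies in the affine hyperplane $\{r\T x = 1\}$. I record two translations. First, since $nA$ has columns $na_j$ we have $P_u(nA) = n\,P_u(A)$, so by Theorem~\ref{thm:HMtorus}(b) the vector $u$ is semistable for linearization $b$ exactly when $b \in n\,P_u(A)$. Second, by Proposition~\ref{prop:relativeInt} the MLE given $u$ exists exactly when $A\bar u \in {\rm relint}(P(A))$. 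I would also record at the outset the identity driving everything: since $A\bar u = \sum_{j \in \supp(u)} \tfrac{u_j}{n}\, a_j$ is a strictly positive convex combination of the columns indexed by $\supp(u)$, we always have $A\bar u \in {\rm relint}(P_u(A))$.

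For the forward direction I would assume the MLE exists, so $A\bar u \in {\rm relint}(P(A))$. A relative-interior point of $P(A)$ is a strictly positive convex combination of all columns, so $A\bar u = Aw$ for some $w \in \RR^m_{>0}$ with $\sum_j w_j = 1$. I then set $b := Au = A(nw)$, which lies in $\ZZ^d$ because $u$ and $A$ are integral, is of the required form $b = Av$ with $v := nw \in \RR^m_{>0}$, and lies in $n\,P_u(A)$ by the driving identity above; hence $u$ is semistable for this $b$.

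For the converse I would assume such a $b = Av$ with $v \in \RR^m_{>0}$ exists and that $u$ is semistable, i.e. $b \in n\,P_u(A)$. Pairing with $r$ gives $r\T b = \sum_j v_j$ from $b = Av$, while $b \in n\,P_u(A) \subseteq n\,P(A)$ forces $r\T b = n$; hence $\sum_j v_j = n$, so $b/n = A(v/n)$ is a strictly positive convex combination of all columns and therefore $b/n \in {\rm relint}(P(A))$. At the same time $b/n \in P_u(A)$. It then remains to upgrade ``$P_u(A)$ meets ${\rm relint}(P(A))$'' to ``$A\bar u \in {\rm relint}(P(A))$'': since $A\bar u \in {\rm relint}(P_u(A))$, I can extend the segment from $b/n$ through $A\bar u$ slightly further inside $P_u(A)$, writing $A\bar u$ as a convex combination of the interior point $b/n \in {\rm relint}(P(A))$ and a point of $P(A)$ with positive weight on $b/n$; the line-segment principle for relative interiors then places $A\bar u$ in ${\rm relint}(P(A))$, so the MLE exists by Proposition~\ref{prop:relativeInt}.

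The only genuinely nontrivial step is this last convex-geometry upgrade in the converse: semistability only delivers \emph{some} point of $P_u(A)$ lying in ${\rm relint}(P(A))$, whereas MLE existence is a statement about the \emph{specific} point $A\bar u$. The bridge is that $A\bar u$ always sits in ${\rm relint}(P_u(A))$, combined with the line-segment principle, which guarantees that once a subpolytope $P_u(A)$ touches ${\rm relint}(P(A))$ its entire relative interior lies in ${\rm relint}(P(A))$; equivalently, $P_u(A)$ meeting ${\rm relint}(P(A))$ means $P_u(A)$ is not contained in any proper face of $P(A)$. The integrality requirement on $b$ poses no obstacle, since in the forward direction the natural choice $b = Au$ is automatically integral.
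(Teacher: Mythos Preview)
Your proof is correct and follows essentially the same approach as the paper: both directions use the Hilbert--Mumford criterion (Theorem~\ref{thm:HMtorus}) together with Proposition~\ref{prop:relativeInt}, with the natural choice $b = Au$ for the forward direction. Your converse is the direct version of the paper's contrapositive, and you spell out more explicitly the two convex-geometry facts the paper uses tersely (namely that $A\bar u \in {\rm relint}(P_u(A))$ and that a relative-interior point of a subpolytope forces face containment), as well as the normalization $\sum_j v_j = n$ coming from $\ones \in \rsp(A)$.
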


\begin{proof}
We first assume that the MLE given $u$ exists. Since the vector $Au$ lies in the polytope $P_u(n A)$, the vector $u \in \ZZ_{\geq 0}^m$ is semistable for the action given by matrix $n A$ with linearization $Au$, by Theorem~\ref{thm:HMtorus}(b). Moreover, since $Au$ is in the relative interior of $P(n A)$, by Proposition~\ref{prop:relativeInt}, the vector $Au$ is of the form $Av$ for some $v \in \RR_{>0}^m$. 

Conversely, if the MLE given $u$ does not exist, then $Au$ lies on the boundary of the polytope $P(n A)$. Hence the whole polytope $P_u(n A)$ is contained in the boundary. 
Thus, for every $b \in 
\ZZ^d$ of the form $b = Av$ for  $v \in \RR_{>0}^m$, we have $b \notin P_u(n A)$. Then $u$ is unstable under the torus action given by matrix $nA$ with linearization~$b$, by Theorem~\ref{thm:HMtorus}(a). \end{proof}

To test MLE existence with Proposition~\ref{prop:intersectNullCones}, we need to test null cone membership for multiple linearizations.
We now discuss a different approach, involving one null cone.
For a vector $b \in P(A)$ we denote by $F_b(A)$ the minimal face of the polytope $P(A)$ that contains~$b$; see Figure~\ref{fig:faces_Fb}.

\begin{proposition}
\label{prop:intersectIrredComp}
Consider a vector of counts $u \in \ZZ^m_{\geq 0}$ with $u_+ = n$ and $A \in \ZZ^{d \times m}$.
The intersection of the irreducible components of the null cone for the torus action given by matrix $n A$ with linearization $b = Au$
is
$\langle e_j \mid  n a_j \notin F_b(nA) \rangle$. 

In particular, the MLE given  $u$ exists in $\Mcal_A$ if and only if 
the intersection of the irreducible components of the null cone
is~$\lbrace 0 \rbrace$.
\end{proposition}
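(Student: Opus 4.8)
The plan is to reduce the statement to a purely polyhedral identity about which coordinates survive when the irreducible components are intersected, and then read off the ``in particular'' claim. By Proposition~\ref{prop:null_cone_linear_spaces}, the irreducible components of the null cone for the action of $nA$ with linearization $b=Au$ are the coordinate subspaces $\langle e_j : j \in J\rangle$ as $J$ ranges over the \emph{maximal} subsets of $[m]$ with $b \notin P_J(nA)$. Since $\bigcap_J \langle e_j : j\in J\rangle = \langle e_j : j\in \bigcap_J J\rangle$, everything reduces to the combinatorial identity
\[
\bigcap_{\substack{J \text{ maximal}\\ b\notin P_J(nA)}} J \;=\; \{\, j\in[m] : na_j \notin F_b(nA)\,\}.
\]
First I would record the standard description of the minimal face: for $b\in P(nA)$ one has $na_j \in F_b(nA)$ if and only if $na_j$ occurs with strictly positive coefficient in some convex representation of $b$; this follows from $b\in{\rm relint}(F_b(nA))$ together with the supporting-hyperplane characterization of $F_b(nA)$.

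For the inclusion $\supseteq$, suppose $na_j \notin F_b(nA)$ and let $J$ be any maximal set with $b\notin P_J(nA)$. If $j\notin J$, then maximality forces $b\in P_{J\cup\{j\}}(nA)$, so $b$ is a convex combination of $\{na_i : i\in J\}$ and $na_j$; as $b\notin P_J(nA)$, the coefficient of $na_j$ is strictly positive, whence $na_j\in F_b(nA)$, a contradiction. Thus $j\in J$ for every such $J$, giving $j\in\bigcap_J J$.

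The reverse inclusion $\subseteq$ is the heart of the argument: given $na_k\in F_b(nA)$ I must exhibit one maximal set $J$ with $b\notin P_J(nA)$ and $k\notin J$. The hard part is securing maximality while keeping $k$ excluded, since a naive greedy extension of an admissible set avoiding $k$ can stop at a $J$ to which $k$ could still legally be adjoined. To force the issue I would follow the ray from $na_k$ through $b$: letting $b^\ast$ be the point where this ray leaves $F_b(nA)$, one gets $b=\tfrac{1}{1+s}b^\ast+\tfrac{s}{1+s}na_k$ with $s>0$, and $b^\ast$ lies on a proper face $G\subsetneq F_b(nA)$ with $na_k\notin G$ (otherwise $b\in G$, contradicting $b\in{\rm relint}(F_b(nA))$). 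I would then take $J$ to be any maximal admissible set that contains the index set of $G$ but is built by only ever adjoining indices different from $k$. Because $b^\ast\in G\subseteq P_J(nA)$ throughout, the displayed relation gives $b\in P_{J\cup\{k\}}(nA)$, so adjoining $k$ is also illegal; combined with the greedy stopping condition for indices $\neq k$, this makes $J$ genuinely maximal while excluding $k$. (The degenerate case $na_k=b$, where $F_b(nA)$ is a vertex, is handled directly by taking $J=\{i : na_i\neq b\}$.)

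Finally, the ``in particular'' statement follows formally from the identity. The MLE given $u$ exists exactly when $b\in{\rm relint}(P(nA))$ (Proposition~\ref{prop:relativeInt}, equivalently Theorem~\ref{thm:MLEpolystableTorus}(c)), which happens precisely when $F_b(nA)=P(nA)$, i.e.\ when every column $na_j$ lies on $F_b(nA)$. By the identity this is equivalent to $\bigcap_J J=\emptyset$, that is, to the intersection of the irreducible components being $\langle e_j : j\in\emptyset\rangle=\{0\}$.
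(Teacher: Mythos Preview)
Your argument is correct and mirrors the paper's: both reduce to the biconditional ``$e_j$ lies in every irreducible component $\Leftrightarrow na_j \notin F_b(nA)$'', use the face property of $F_b(nA)$ for the easy direction, and use a coning construction for the harder one. The paper phrases the latter as ``the union over facets $F$ of $F_b(nA)$ of $\mathrm{conv}(F \cup \{na_k\})$ covers $F_b(nA)$'', while you follow the ray from $na_k$ through $b$ to the boundary; these are the same geometric observation.

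One small slip in your parenthetical on the degenerate case: you assert that $na_k = b$ forces $F_b(nA)$ to be a vertex and propose $J = \{i : na_i \neq b\}$. Neither holds in general, since $b$ can coincide with a column $na_k$ that is \emph{not} a vertex of $P(nA)$ (e.g.\ a column lying in the relative interior of an edge), and then your proposed $J$ has $b \in P_J(nA)$. The fix is immediate: when $na_k = b$, adding $k$ to any $J$ trivially gives $b \in P_{J\cup\{k\}}(nA)$, so any $J$ that is maximal among subsets of $[m]\setminus\{k\}$ with $b \notin P_J(nA)$ is automatically maximal in $[m]$; such a $J$ exists since the null cone is nonempty (e.g.\ $J=\emptyset$ is admissible).
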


\begin{proof}
Define $A' := n A$ and consider the polytope $P(A')$, the convex hull of $a_j' := n a_j$.
We consider the null cone under the torus action given by matrix $A'$ with linearization $b$. 
A linear space $\langle e_j \mid j \in J \rangle$ is in the null cone if and only if $b \notin P_J(A')$, by Proposition~\ref{prop:null_cone_linear_spaces}.

We will show that $e_j$ is contained in every irreducible component of the null cone if and only if $a_j' \notin F_b(A')$.
From this, the second paragraph of the statement follows because the MLE given $u$ exists if and only if $b = Au$ is in the relative interior of the polytope $P(A')$, i.e. $b$ does not lie on a proper face, and $F_b(A') = P(A')$.

Consider an index $j$ with $a_j' \notin F_b(A')$. 
All possible expressions for $b$ as $b = Av$ for some $v\geq 0$ have $v_j = 0$, since $F_b(A')$ is a face of $P(A')$.
Let $J \subseteq [m]$ be
such that $b \notin P_J(A')$, 
i.e. $\langle e_j \mid j \in J \rangle$ is in the null cone.
Taking $J' = J \cup \{ j \}$, the polytope $P_{J'}(A')$ still does not contain~$b$.
Hence, $e_j$ lies in an irreducible component of the null cone that contains $\langle e_{j'} \mid j' \in J' \rangle$;
so $e_j$ lies in every irreducible component.

Conversely, consider an index $j$ with $a_j' \in F_b(A')$. We show that there exists an irreducible component of the null cone that does not contain $e_j$. For each facet $F \subseteq F_b(A')$, let $v_F$ be a vector with $\supp(v_F) = \{ k \mid a_k' \in F \}$, and take $w_F$ with $\supp(w_F) = \supp(v_F) \cup \{ j \}$. The union of $P_{w_F}(A')$ over facets $F \subseteq F_b(A')$ is the whole polytope $F_b(A')$, so $b \in P_{w_F}(A')$ for some facet~$F$. 
By the minimality of $F_b(A')$, we have $b \notin P_{v_F}(A')$.
Hence $\langle e_k \mid a_k' \in F \rangle$ is contained in an irreducible component of the null cone but, since $b \in P_{w_F}(A')$, the irreducible component does not contain $e_j$. 
\end{proof}

\begin{figure}[htbp]
    \centering
    \includegraphics[width=3cm]{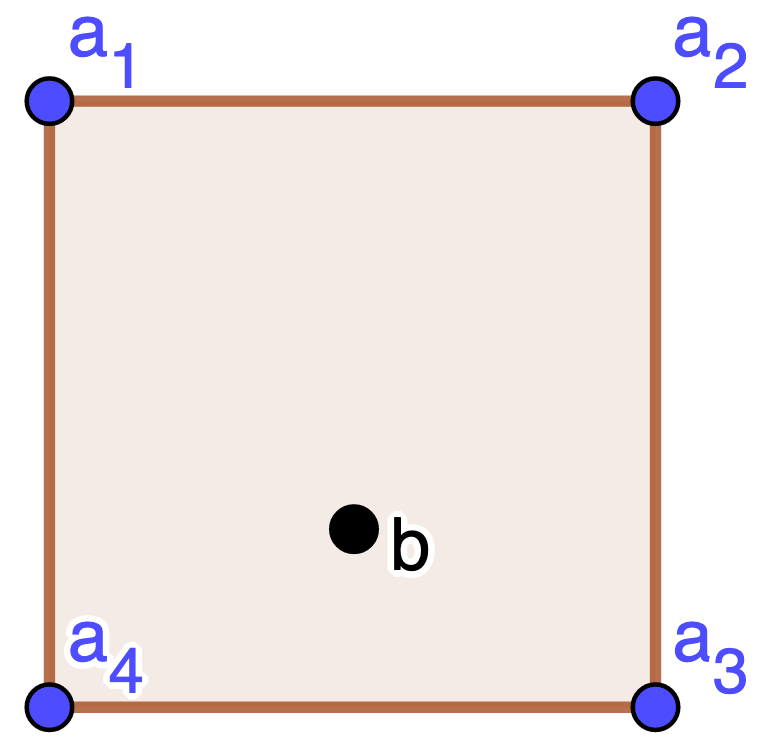} \qquad 
    \includegraphics[width=3cm]{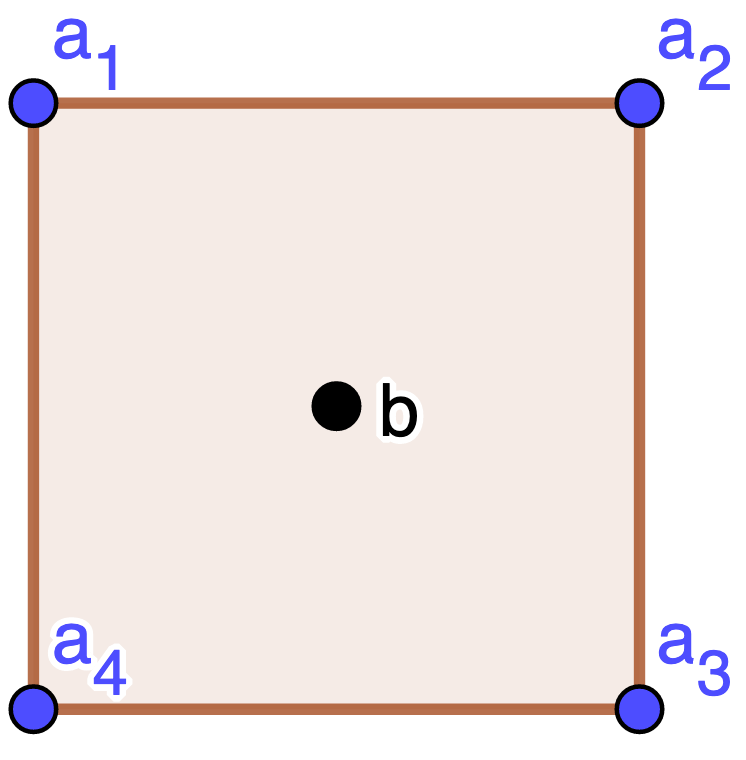}
    \qquad
    \includegraphics[width=3cm]{squares3_2and3.png}
        \qquad
    \includegraphics[width=3cm]{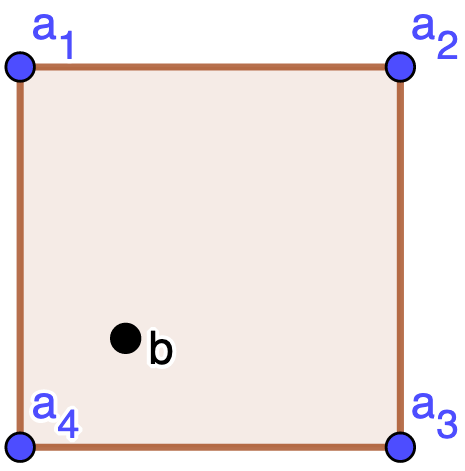}
    \caption{The face $F_b(A)$ of $P(A)$, for four choices of $b \in \ZZ^2$. 
    For example, the leftmost picture displays the face $P_J(A)$ for $J = \{1,2,3,4\}$.
    The vectors $a_i$ outside of the face are in the intersection of all the irreducible components of the null cone, see Proposition~\ref{prop:intersectIrredComp}. For the corresponding components of the null cone, see Figure~\ref{fig:max_not_containing_b}.}
    \label{fig:faces_Fb}
\end{figure}

\begin{example}
We illustrate Proposition~\ref{prop:intersectIrredComp} for the log-linear model $\Mcal_A$, where
\[
A = \begin{blockarray}{ccccccccc}
& p_{000} & p_{001} & p_{010} & p_{011} & p_{100} & p_{101} & p_{110} & p_{111} \\ 
\begin{block}{c(cccccccc)}
p_{00+} & 1 & 1 &   &   &   &   &   &   \\ 
p_{01+} &   &   & 1 & 1  &  &   &   &   \\ 
p_{10+} &   &   &   &  & 1  & 1 &   &   \\ 
p_{11+} &   &   &   &   &   &   & 1 & 1 \\ 
p_{+00} & 1 &   &   &  & 1  &   &   &   \\ 
p_{+01} &   & 1 &   &   &   & 1 &   &   \\ 
p_{+10} &   &   & 1 &   &   &   & 1 &   \\ 
p_{+11} &   &   &   &  1 &  &   &   & 1 \\
\end{block}
\end{blockarray}\]
 This is
 the graphical model on three binary random variables $X_i$ given by the path graph $1$---$2$---$3$,
defined by the conditional independence relation $X_1 \ci X_3 | X_2$. 
To identify the graphical model with $\Mcal_A$, we identify $\RR^8$ with $\RR^{2 \times 2 \times 2}$ and label the columns of $A$ by entries $p_{ijk}$.
The sufficient statistics of the model are the eight marginals $p_{ij+} := p_{ij0} + p_{ij1}$ and $p_{+ij} := p_{0ij} + p_{1ij}$, where $(i,j) \in \{ 0, 1\}^2$.

We compute the irreducible components of the null cone for
the torus action given by matrix $nA$ with linearization $Au$, for
various $u \in \ZZ^8$.
The null cone is 
the zero locus of those monomials in the ring $\CC[x_1, \ldots, x_8]$ such that the supports of their exponent vectors index minimal sub-polytopes of $P(nA)$ that contain $b$, as in Proposition~\ref{prop:vanishing_monomials}.

Let $u = \begin{bmatrix} 1 & 0 & 1 & 0 & 0 & 1 & 0 & 1 \end{bmatrix}\T$. Then $b = \ones \in \RR^8$ and the null cone  is the vanishing locus of
$x_1 x_3 x_6 x_8$, 
$ x_1 x_4 x_6 x_7$, 
$ x_2 x_3 x_5 x_8$, and
$x_2 x_4 x_5 x_7$.
The irreducible components only intersect at $\{ 0 \}$, hence the MLE given $u$ exists in $\Mcal_A$. 

Let $u = \begin{bmatrix} 1 & 1 & 1 & 1 & 0 & 1 & 1 & 0 \end{bmatrix}\T$. Then $b = \begin{bmatrix} 2 & 2 & 1 & 1 & 1 & 2 & 2 & 1\end{bmatrix}$ and the null cone is the vanishing locus of
$x_1 x_2 x_3 x_4 x_6 x_7$,
$x_2^2 x_3 x_4 x_5 x_7$, 
$x_1 x_2 x_3^2 x_6 x_8$, and 
$x_2^2 x_3^2 x_5 x_8$.
The irreducible components only intersect at $\{ 0 \}$, so the MLE given $u$ exists in $\Mcal_A$. 

When $u = \begin{bmatrix} 1 & 0 & 1 & 0 & 0 & 1 & 0 & 0 \end{bmatrix}\T$, the null cone is the vanishing locus of
$x_1 x_3 x_6$ and $x_2 x_3 x_5$. 
The irreducible components intersect at $\langle e_4, e_7, e_8 \rangle$, hence the MLE given $u$ does not exist in $\Mcal_A$. We can also see this from Theorem~\ref{prop:relativeInt}, as follows. The vector $b = Au = \begin{bmatrix} 1 & 1 & 1 & 0 & 1 & 1 & 1 & 0 \end{bmatrix}\T$ has some zero entries. Since $P(A)$ only contains non-negative points, $b$ must lie on the boundary of $P(A)$.\hfill\exSymbol
\end{example}

\subsection{The moment map gives the MLE}
\label{sec:torusmoment}
In Theorem~\ref{thm:MLEpolystableTorus}, we compared two optimization problems: finding the MLE in a log-linear model, and norm minimization in an orbit under a related torus action.
More specifically, we have seen that one problem attains its optimum if and only if the other one does.
We now describe how these two optima are related via the moment map.
For this, we consider two possible closures of the log-linear model $\mathcal{M}_A$.
The Euclidean closure of the model is the extended log-linear model $\overline{\mathcal{M}_A}$.
We also consider the smallest Zariski closed subset of $\Delta_{m-1}$ containing $\mathcal{M}_A$, 
denoted $\overline{\mathcal{M}_A}^Z$, i.e. the Zariski closure of $\mathcal{M}_A$ in $\RR^m$ intersected with the simplex $\Delta_{m-1}$.
Its defining ideal is given in~\eqref{eq:ZarMA}.
In the proof of the following theorem, we use the result that these two closures are equal~\cite[Theorem 3.2]{GeigerMeekSturmfels}.

\begin{theorem}
\label{thm:MLEviaMomentMap}
Let $u \in \ZZ_{\geq 0}^m$ be a vector of counts with $u_+ = n$.
Consider a matrix $A \in \ZZ^{d \times m}$ with $\ones \in \rsp(A)$, and let $b = Au \in \ZZ^d$. Consider the orbit closure of $\ones$ under the torus action of $GT_d$ given by matrix $n A$ with linearization $b$. Let $q \in \CC^m$ be a point in the orbit closure where the moment map vanishes. Then the extended MLE given $u$ for the model $\mathcal{M}_A$ has $j$th entry
\begin{equation}\label{eq:orbitMLE} \frac{|q_j|^2}{\| q\|^2}.\end{equation}
If $\ones$ is polystable, then this vector is the MLE.
\begin{proof}
At a point $q \in \CC^m$ where the moment map vanishes, we have $nAq^{(2)} = \| q \|^2 b$ by Theorem~\ref{thm:kempfNessTorus}. Consider the vector $q'$ with $j$th entry as in~\eqref{eq:orbitMLE}.
We show that $q'$ is the extended MLE given $u$ in $\mathcal{M}_A$.
Since $q' \in \Delta_{m-1}$ and $A q' = A \frac{u}{n}$, it remains to show that $q' \in \overline{\mathcal{M}_A}$. Using the equality $\overline{\mathcal{M}_A}^Z = \overline{\mathcal{M}_A}$, it suffices to show that $q'$ satisfies the equations in~\eqref{eq:ZarMA}.
Since $q$ lies in the orbit closure of $\ones$, 
it satisfies the equations in~\eqref{eq:ZarMA}, where $A$ is replaced by the matrix with $(i,j)$ entry $na_{ij} - b_j$. That is,
it satisfies
$ q^v - q^w = 0$ for all $v,w \in \ZZ_{\geq 0}^m$ with $n A v - b v_+ = n A w - b w_+$. We show that this covers all pairs of vectors $v,w$ with $Av = Aw$. Indeed, if $Av = Aw$ then $v_+ = w_+$, because $\ones$ is in the row span of $A$. Hence $q \in \overline{\mathcal{M}_A}$.
Now we conclude that $q'$ also satisfies the equations in~\eqref{eq:ZarMA}, as follows. 
For each equation $q^v = q^w$, we can take norms on both sides and square both sides. The equality $v_+ = w_+$ then shows that $(q')^v = (q')^w$, since the denominator is present on both sides with equal power $\| q \|^{2 v_+}$.

In the polystable case, the vector $q$ is in the orbit of $\ones$, hence has all entries positive.
Thus the entries of $q'$ are also positive, so $q'$ is the MLE given $u$ in $\mathcal{M}_A$.
\end{proof}
\end{theorem}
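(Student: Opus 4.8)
The plan is to read the vanishing of the moment map as the balance equation supplied by Kempf--Ness, and then to verify directly that the candidate vector $q'$ with entries $q'_j = |q_j|^2/\|q\|^2$ is a point of the (extended) model whose sufficient statistics match the data. By Theorem~\ref{thm:kempfNessTorus} applied to the matrix $nA$ with linearization $b$, the moment map vanishes at $q$ precisely when $n A q^{(2)} = \|q\|^2 b$, where $q^{(2)}$ has $j$th entry $|q_j|^2$. From this, $q' = q^{(2)}/\|q\|^2$ is seen at once to lie in $\Delta_{m-1}$, since its entries are nonnegative and sum to $1$; and dividing the balance equation by $n\|q\|^2$ gives $A q' = b/n = A\bar u$, so $q'$ satisfies Birch's equation~\eqref{eq:Birch}. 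Thus, modulo model membership, $q'$ is forced to be the (extended) MLE.

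The heart of the argument, and the step I expect to be the main obstacle, is to prove $q' \in \overline{\Mcal_A}$. I would route this through the Zariski closure, using the identity $\overline{\Mcal_A}^Z = \overline{\Mcal_A}$ of Geiger--Meek--Sturmfels, so that it suffices to show $q'$ satisfies every binomial $p^v - p^w$ of the ideal $I_A$ in~\eqref{eq:ZarMA}, i.e.\ for all $v,w \in \ZZ_{\geq 0}^m$ with $Av = Aw$. First I would use that $q$ lies in the orbit closure of $\ones$: this closure satisfies the binomials of the affine toric variety attached to the shifted columns $n a_j - b$, so $q^v = q^w$ whenever $n A v - b\,v_+ = n A w - b\,w_+$. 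The hypothesis $\ones \in \rsp(A)$ is exactly what links the two binomial systems: writing $r\T A = \ones$ and applying $r$ to $Av = Aw$ forces $v_+ = w_+$, whence $Av = Aw$ already implies $nAv - b\,v_+ = nAw - b\,w_+$, so every model binomial is satisfied by $q$.

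It then remains to transfer each relation $q^v = q^w$ from the complex point $q$ to the real, normalized point $q'$. Taking absolute values and squaring gives $|q^v|^2 = |q^w|^2$, and because $v_+ = w_+$ the normalizing factors $\|q\|^{2 v_+}$ and $\|q\|^{2 w_+}$ coincide; dividing through yields $(q')^v = (q')^w$. Hence $q'$ satisfies $I_A$ and lies in $\overline{\Mcal_A}$. Combined with $q' \in \Delta_{m-1}$ and $Aq' = A\bar u$, the Birch characterization applied to the extended model identifies $q'$ as the extended MLE, which is unique by strict concavity of the likelihood. Finally, in the polystable case Theorem~\ref{thm:kempfNessTorus} places $q$ in the actual orbit of $\ones$, so $q$ has full support; then $q'$ has strictly positive entries and lies in $\Mcal_A$ itself, making it the genuine MLE. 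The delicate point throughout is the bookkeeping forced by $\ones \in \rsp(A)$, which is needed both to match the two lattices of binomials and to make the normalization cancel, so I would isolate the equality $v_+ = w_+$ as the recurring lemma.
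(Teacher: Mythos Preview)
Your proposal is correct and follows essentially the same route as the paper's own proof: you invoke Kempf--Ness to obtain $nAq^{(2)} = \|q\|^2 b$, reduce model membership to the toric binomials via the Geiger--Meek--Sturmfels equality of closures, use $\ones \in \rsp(A)$ to force $v_+ = w_+$ so that the binomial systems for $A$ and for the shifted matrix agree, and transfer $q^v = q^w$ to $q'$ by taking absolute values, squaring, and cancelling $\|q\|^{2v_+}$. The polystable case is handled identically.
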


\begin{example}
\label{ex:loglinear}
Consider the log-linear model $\mathcal{M}_A$, and vector of counts $u$, where
\[ A = \begin{bmatrix} 2 & 1 & 0 \\ 0 & 1 & 2 \end{bmatrix} , \qquad 
u = \begin{bmatrix} 2 \\ 1 \\ 1 \end{bmatrix}, \qquad b = Au = \begin{bmatrix} 5 \\ 3 \end{bmatrix}.
\]
This model is the plane conic $x_2^2 = x_1x_3$.  The existence of the MLE given $u$ in $\mathcal{M}_A$ can be characterized by the torus action given by  matrix $nA$ with linearization~$b$, by Theorem~\ref{thm:MLEpolystableTorus}, where $n = u_+ = 4$.
Since $b$ is a positive combination of the columns of $A$, the vector $\ones$ is polystable under this action and the MLE given $u$ exists. 
The MLE relates to a point of minimal 2-norm in the orbit of $\ones$ under the torus action given by matrix $n A$ with linearization $b$, by Theorem~\ref{thm:MLEviaMomentMap}. We show how to obtain the MLE from a point $q$ of minimal norm in the orbit of $\ones$.

Since $q$ lies in the orbit of $\ones$, its entries are $q_j = \lambda_1^{n a_{1j} - b_1} \lambda_2^{n a_{2j} - b_2}$, where $\lambda_i$ are non-zero complex numbers, ie.
$ q = \begin{bmatrix} \lambda^3 & \lambda^{-1} & \lambda^{-5} \end{bmatrix}\T$ where $\lambda = \frac{\lambda_1}{\lambda_2}$. 
Moreover, the moment map vanishes at $q$, so we have $n A q^{(2)} = \| q \|^2 b$. Combining these, gives the condition 
$ 3 \nu^2 - \nu - 5 = 0 $, 
where $\nu = |\lambda|^{8}$,
and we obtain
that the MLE is
\[
\phantom{aaaaaaaaaaaaaaaaa}
\hat{p} = \frac{1}{\nu^2 + \nu + 1}\begin{bmatrix} \nu^2 \\ \nu \\ 1 \end{bmatrix} =\begin{bmatrix}\frac{31+\sqrt{61}}{4 \sqrt{61}+52}\cr \frac{3+3 \sqrt{61}}{4 \sqrt{61}+52}\cr \frac{9}{2 \sqrt{61}+26}\end{bmatrix}
\sim  \begin{bmatrix} 0.4662 \\ 0.3175 \\ 0.2162 \end{bmatrix}.
\phantom{aaaaaaaaaaaaa}\diamondsuit
\]
\end{example}

Theorem~\ref{thm:MLEviaMomentMap} shows that the MLE can be obtained from norm minimization on an orbit. It suggests the possibility of using algorithms from invariant theory to compute the MLE, as we describe in Section~\ref{sec:scaling}. In the next example, we motivate the study of these algorithms, returning to the independence model. 

The independence model on a pair of discrete random variables is a log-linear model, as we saw in a special case in Example~\ref{ex:indep}. In the following example, we apply Theorem~\ref{thm:MLEviaMomentMap} to obtain the MLE given $u$ for the independence model from the point of minimal 2-norm in the orbit 
of $\ones \otimes \ones$
under a torus action.

\begin{example}
\label{ex:indep2}
The independence model on a pair of random variables, each with $m$ states, is the log-linear model $\Mcal_A$, where \begin{equation}\label{eqn:Aforindependence}
    A = \begin{bmatrix} & & \\ & I_m \otimes \ones & \\ & &  \\ 
    & & \\ 
& \ones \otimes I_m & \\ & &  \end{bmatrix} \in \ZZ^{2m \times m^2}.
\end{equation} 
The first $m$ rows are $I_m \otimes \ones$ and second $m$ rows are $\ones \otimes I_m$, where $I_m$ is the $m \times m$ identity matrix, and $\ones$ is the all-ones vector of length $m$. 
The Kronecker product $A_1 \otimes A_2$ of two matrices $A_k \in \RR^{m_k \times n_k}$ is a matrix of size $m_1 m_2 \times n_1 n_2$. We index its rows by $(i_1,i_2)$ where $i_k$ ranges from $1$ to $m_k$, and its columns by $(j_1,j_2)$, where $j$ ranges from $1$ to $n_k$. The entry of $A_1 \otimes A_2$ at index $((i_1,i_2),(j_1,j_2))$ is $(A_1)_{i_1 j_1} (A_2)_{i_2 j_2}$. 
See Example~\ref{ex:indep} for~\eqref{eqn:Aforindependence} in the case $m=3$.

The model is the orbit of the all-ones matrix under the left-right action of $\GT_m(\RR) \times \GT_m(\RR)$ on the space of $m \times m$ matrices, after restricting to positive entries that sum to one.
Equivalently, the model is the orbit under the action in~\eqref{eq:torusd} of the torus $\GT_{2m}(\RR)$ on $\CC^{m \times m}$ given by the matrix $A$ in~\eqref{eqn:Aforindependence}, again after restricting to positive entries that sum to one. Equivalently, the model consists of all rank one matrices with positive entries summing to one.

Given a data matrix $u \in \mathbb{Z}^{m \times m}$, we consider the orbit of the all-ones matrix $\ones \otimes \ones \in \CC^{m \times m}$, under the action of $\GT_{2m}(\CC)$ given by the matrix $nA$ with linearization $b$, where 
$A$ is~\eqref{eqn:Aforindependence}, the sample size is $n = u_{++}$, and $b = Au \in \ZZ^{2m}$. 
We seek a matrix in the orbit closure of $\ones \otimes \ones$ at which the infimum norm is attained.
By Kempf-Ness, such matrices are those at which the moment map vanishes.
The vanishing of the moment map at $q \in \CC^{m \times m}$ gives, by Theorem~\ref{thm:kempfNessTorus},
    \[ n \begin{bmatrix} |q_{1+}|^2 \\ \vdots \\ |q_{m+}|^2 \\ |q_{+1}|^2 \\ \vdots \\ |q_{+m}|^2 \end{bmatrix} = \| q \|^2 \begin{bmatrix} u_{1+} \\ \vdots \\ u_{m+} \\ u_{+1} \\ \vdots \\ u_{+m} \end{bmatrix}.\]
We relate the MLE to the matrix $p$ with entries $p_{ij} = \frac{|q_{ij}|^{2}}{\| q\|^2}$.
The matrix $p$ has non-negative entries summing to one, and the same row and column sums as the empirical distribution $\bar{u}$. It remains to show that $p$ is in $\Mcal_A$. 
For $g = \begin{bmatrix}\lambda_1 & \cdots & \lambda_m & \mu_1 & \cdots & \mu_m \end{bmatrix}$ in $\GT_{2m}(\CC)$, we have
    \[ \left( g \cdot (\ones \otimes \ones) \right)_{ij} = \lambda_i^{n} \mu_j^{n} \left( \prod_{k = 1}^m \lambda_k^{-b_k} \right) \left( \prod_{l = 1}^{m} \mu_l^{-b_{m+l}} \right) .\]
Hence $p$ is a scalar multiple of the matrix with $(i,j)$ entry
$|\lambda_i|^{2n} |\mu_j|^{2n}$,
and all such matrices have rank at most one.
The latter is a closed condition, so any non-zero $p$ obtained from the orbit closure of $\ones \otimes \ones$ has rank one.
Hence $p$ lies in the closure of the independence model. If the orbit is closed, all entries are positive and it is the MLE. 
Otherwise, it is the extended MLE; see Threorem~\ref{thm:MLEpolystableTorus}.\hfill\exSymbol
\end{example}

\section{Scaling algorithms}
\label{sec:scaling}

We saw in Theorem~\ref{thm:MLEviaMomentMap} that the MLE in a log-linear model can be obtained from a point of minimal norm in an orbit.
This connects two problems:
\begin{enumerate}
    \item norm minimization in a complex torus orbit
    \item maximum likelihood estimation in a log-linear model.
\end{enumerate}

Algorithms exist for both problems: the former can be approached with convex optimization methods, and the latter with an algorithm called iterative proportional scaling.  In fact, both families of algorithms  can be thought of as generalizations of two sides of a classical scaling algorithm due to Sinkhorn~\cite{sinkhorn1964}. 
We explain these different generalizations, and how Theorem~\ref{thm:MLEviaMomentMap} completes the circle of algorithms, see Figure~\ref{fig:algorithms}.

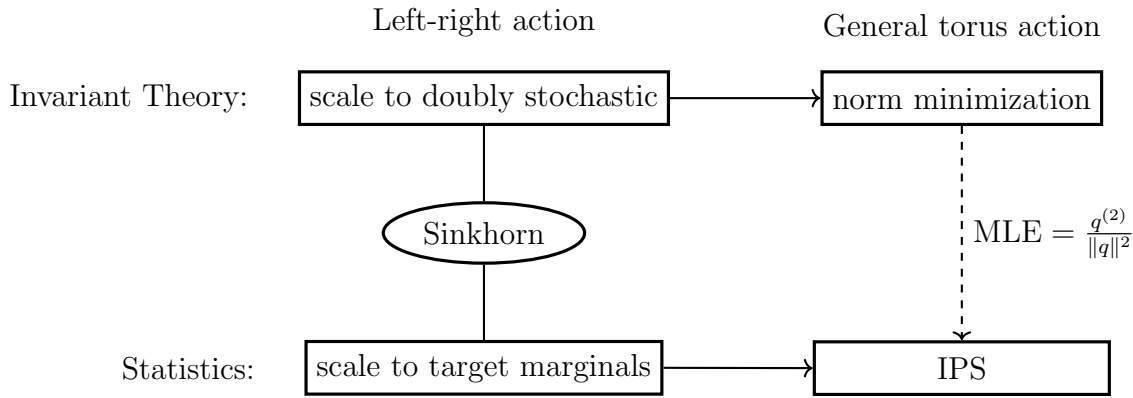
\begin{figure}[htbp]
\centering
\begin{tikzpicture}[
roundnode/.style={ellipse, draw=black, very thick, minimum size=7mm},
squarednode/.style={rectangle, draw=black, very thick, minimum size=7mm},
description/.style={rectangle, very thick, minimum size=5mm},
]
\node[roundnode] (sinkhorn){Sinkhorn};
\node[squarednode] (doubly)[above=of sinkhorn] {scale to doubly stochastic};
\node[squarednode] (target) [below=of sinkhorn] {scale to target marginals};
\node[squarednode] (norm) [right=2cm of doubly] {norm minimization};
\node[squarednode] (ips) [right=2cm of target, below=2.85cm of norm] {$\qquad \quad \;$ IPS $\qquad \quad \;$};
\node[description] (left) [above=0.3cm of doubly] {Left-right action};
\node[description] (torus) [above=0.3cm of norm] {General torus action};
\node[description] (inv) [left=0.5cm of doubly] {Invariant Theory:};
\node[description] (stat) [left=0.5cm of target] {Statistics:};

\draw[thick] (sinkhorn.north) -- (doubly.south);
\draw[thick] (sinkhorn.south) -- (target.north);
\draw[->, thick] (doubly.east) -- (norm.west);
\draw[->, thick] (target.east) -- (ips.west);
\draw[->, dashed, thick] (norm.south) -- node[anchor=west, align=center] {$\text{MLE} = \frac{q^{(2)}}{\|q\|^2}$} (ips.north) ;
\end{tikzpicture}

\caption{Overview of different scaling algorithms.  The historical progression is from left to right, starting with two different sides to Sinkhorn scaling.}
    \label{fig:algorithms}
\end{figure}

\subsection{Sinkhorn scaling}

The classical scaling algorithm of Sinkhorn~\cite{sinkhorn1964} scales a square matrix $M$ with positive entries to a doubly stochastic matrix. That is, one finds diagonal matrices $D_1$ and $D_2$  such that $D_1 M D_2$ has all row sums and all column sums equal to one. The doubly stochastic matrix is obtained by alternatingly scaling the row and column marginals to one. 
A natural extension is to scale the matrix $M$ to other fixed row sums and column sums~\cite{sinkhorn1967}.
Both versions of Sinkhorn scaling are depicted on the left of Figure~\ref{fig:algorithms}.
These algorithms involve the left-right action of a pair of tori $\GT_{m_1} \times \GT_{m_2}$ on an $m_1 \times m_2$ matrix: the algorithms iterate between updates via the left torus and via the right torus. 

Alternating scaling of the rows and columns of a matrix to fixed marginals is an instance of a scaling algorithm which, in the statistics literature, goes back to Deming and Stephan in \cite{IPForiginal}. For the independence model on two variables, the algorithm finds the MLE by alternating between scaling the row sums and the column sums to match the marginals of the empirical distribution.
Given an observed matrix of counts $u \in \ZZ_{\geq 0}^{m \times m}$ with sample size $u_{++} = n$, and
initialized at the uniform distribution, the algorithm has two steps. The $(i,j)$ entry for the two steps is:
\begin{equation}
    \label{eqn:IPS_two_steps}
    \frac{1}{m^2} \, \mapsto \,  \frac{1}{m} \cdot \frac{u_{i+}}{n} \, \mapsto \,   \frac{u_{i+}}{n} \cdot \frac{u_{+j}}{n}.
\end{equation} 
If all entries are positive, the output is the MLE to the independence model given $u$, otherwise it is the extended MLE. 
This is the first example of {\em iterative proportional scaling} (IPS), which we describe in the next subsection.

\subsection{Iterative proportional scaling}

In the previous section, we saw that alternating scaling of a matrix to fixed row and column sums gives the MLE to the independence model, when initialized at the uniform distribution.
This is scaling under a product of tori $\GT_m \times \GT_m$.
We saw in Examples~\ref{ex:indep} and~\ref{ex:indep2} how the independence model fits into the framework of log-linear models. In terms of the group action, this replaces the left-right action of a pair of tori $\GT_m \times \GT_m$ with the action of a single torus $\GT_{2m}$, acting via~\eqref{eq:torusd}, where $A$ is the matrix in~\eqref{eqn:Aforindependence}.

In this section, we explain how Sinkhorn scaling extends to algorithms for maximum likelihood estimation for a general log-linear model, the bottom arrow of Figure~\ref{fig:algorithms}. 

Alternating between matching row and column sums can be extended to hierarchical models, which summarize data by contingency tables \cite{fienberg1970}, by iteratively updating the various marginals.
The approach was extended to more general log-linear models by 
Darroch and Ratcliff in~\cite{IPS-DR}.

For the log-linear model $\Mcal_A$, the MLE $\hat{p}$ must satisfy the equation $A\hat{p} = A \bar{u}$ from Birch's theorem, where $\bar{u} = \frac{u}{n}$ is the empirical distribution. 
IPS finds the extended MLE in $\mathcal{M}_A$ given an empirical distribution $\bar{u} \in \Delta_{m-1}$.
We define IPS for a log-linear model given by a matrix $A \in \ZZ_{\geq 0}^{d \times m}$ whose column sums are all equal. 
 Starting at the uniform distribution $p^{(0)}= \frac{1}{m} \ones$, we iterate until the $k$th update $p^{(k)}$ has sufficient statistics $b^{(k)} = A p^{(k)}$ close to the target sufficient statistics $b = A \bar{u}$, i.e. until \eqref{eq:Birch} holds approximately.
The update step is:
\begin{equation}
    \label{eq:ipsupdate}
p^{(k+1)}_j = \prod_{i = 1}^d \left( \frac{(A\bar{u})_i}{(Ap^{(k)})_i} \right)^{\nicefrac{a_{ij}}{\alpha}} p^{(k)}_j,
\end{equation}
where $\alpha$ is the common column sum of $A$; see \cite[Algorithm 7.3.11]{Sullivant}.
This is the action of a torus element (obtained by componentwise division of $A \bar{u}$ by $A p^{(k)}$
and then componentwise exponentiation by $\nicefrac{1}{\alpha}$)
on the vector $p^{(k)}$.
Here the torus action is given by the matrix $ A$
with linearization $b=0$, see~\eqref{eq:torusAction}.

We can view maximum likelihood estimation as a norm minimization problem in a different way to Theorem~\ref{thm:MLEviaMomentMap}, by interpreting IPS as minimizing KL divergence.

\begin{proposition}
\label{prop:cap_KL}
Consider the log-linear model $\Mcal_A$ where $A \in \ZZ^{d \times m}$ has $\ones$ in its row span. Then there exists a matrix $\tilde{A} \in \mathbb{Q}_{\geq 0}^{(d+1) \times m}$, with all column sums equal, such that  $\mathcal{M}_A = \mathcal{M}_{\tilde{A}}$, iterative proportional scaling in~\eqref{eq:ipsupdate} with matrix $\tilde A$ converges, and at each update step the KL divergence to the MLE decreases.
\end{proposition}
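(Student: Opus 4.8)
The plan is to prove the three assertions separately: the construction of $\tilde A$ is a linear-algebra reduction to the regime in which IPS is defined, while convergence together with the monotone KL-decrease is the classical Darroch--Ratcliff argument~\cite{IPS-DR}, whose crux is one clean identity. First, for the construction of $\tilde A$: since $\ones\in\rsp(A)$, I would clear the signs of $A$ without changing its row span by adding a common multiple $C\ones$ to every row. Each new row still lies in $\rsp(A)$, and a short computation shows that for all but one value of $C$ the all-ones vector remains in the span of the new rows, so $\rsp$ is unchanged; taking $C\in\ZZ$ large makes every entry positive. Call the result $A'$, let $s=\ones\T A'$ be its (positive, integral) vector of column sums and $S:=\max_j s_j$, and append the single slack row $\rho:=S\ones - s$. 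Because $s$ is a sum of rows of $A'$ and $S\ones\in\rsp(A)$, we have $\rho\in\rsp(A)$, hence $\rsp(\tilde A)=\rsp(A)$ and therefore $\Mcal_A=\Mcal_{\tilde A}$; by construction $\rho\geq 0$, so $\tilde A\in\QQ_{\geq 0}^{(d+1)\times m}$, and every column sum of $\tilde A$ equals $S=:\alpha$. Crucially $\ones\in\rsp(\tilde A)$ still holds, so Birch's equation $\tilde A\hat p=\tilde A\bar u$ continues to characterize the MLE $\hat p$.

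Next, for the per-step KL-decrease, write $b:=\tilde A\bar u$, $b^{(k)}:=\tilde A p^{(k)}$, $r_i:=b_i/b^{(k)}_i$, and set $D_k:=\KL(\hat p\,\|\,p^{(k)})$. From~\eqref{eq:ipsupdate} one has $\log(p^{(k+1)}_j/p^{(k)}_j)=\tfrac1\alpha\sum_i \tilde a_{ij}\log r_i$, so
\[
D_k-D_{k+1}=\sum_j \hat p_j\log\frac{p^{(k+1)}_j}{p^{(k)}_j}=\frac1\alpha\sum_i(\tilde A\hat p)_i\log r_i=\frac1\alpha\sum_i b_i\log\frac{b_i}{b^{(k)}_i},
\]
where the last step uses Birch, $\tilde A\hat p=b$. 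The equal-column-sum property gives $\sum_i b_i=\alpha$ and, via weighted AM--GM applied to the exponents $\tilde a_{ij}/\alpha$ (which sum to $1$), $\sum_j p^{(k+1)}_j\le 1$, hence $\sum_i b^{(k)}_i\le\alpha$ inductively. The log-sum inequality then gives $D_k-D_{k+1}\geq \log(\alpha/\sum_i b^{(k)}_i)\geq 0$, with equality only when $b^{(k)}=b$, i.e. when $p^{(k)}$ already satisfies Birch's equation. This is precisely the statement that the KL divergence to the MLE decreases at every step.

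For convergence, note that $D_k$ is non-increasing and bounded below, hence convergent, so $D_k-D_{k+1}\to 0$. Each iterate lies in the positive torus orbit of $\tfrac1m\ones$ (the update is the action~\eqref{eq:torusAction} with trivial linearization), so each $p^{(k)}$ satisfies the homogeneous binomials in~\eqref{eq:ZarMA}; moreover $\sum_j p^{(k)}_j\le 1$ confines the iterates to a compact set. Passing to a convergent subsequence $p^{(k_l)}\to p^*$ and using the identity above, the vanishing of the limiting increment forces, through the equality case of the log-sum inequality, both $\tilde A p^*=\tilde A\bar u$ and $\sum_j p^*_j=1$, so $p^*\in\Delta_{m-1}$. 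Since the binomials~\eqref{eq:ZarMA} cut out a closed set, $p^*$ satisfies them too, and the equality $\overline{\Mcal_A}^Z=\overline{\Mcal_A}$ of~\cite{GeigerMeekSturmfels} places $p^*\in\overline{\Mcal_A}$. Thus $p^*$ is a distribution in the extended model with sufficient statistics $\tilde A\bar u$, so by uniqueness of the extended MLE (strict concavity of the log-likelihood) we get $p^*=\hat p$; as every subsequential limit equals $\hat p$, the whole sequence converges to $\hat p$ and $D_k\to 0$.

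The main obstacle is this final passage from ``sufficient statistics converge'' to ``iterates converge to the MLE'', and the delicate point is the boundary case where $\hat p$ has zero coordinates, so that naively a limit point could carry the correct marginals but the wrong support. The device that resolves it is that the homogeneous binomials of~\eqref{eq:ZarMA} are preserved along the torus updates — exactly as in the proof of Theorem~\ref{thm:MLEviaMomentMap} — which keeps every limit point inside $\overline{\Mcal_A}$ and, combined with uniqueness of the extended MLE, pins it to $\hat p$.
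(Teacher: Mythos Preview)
Your construction of $\tilde A$ is essentially identical to the paper's: add a large integer to every entry (equivalently, add $C\ones$ to every row) to force non-negativity, then append a slack row to equalize the column sums. Your extra care about the exceptional value of $C$ for which $\ones$ drops out of the row span is a detail the paper glosses over but which is easy either way.

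The real difference is in how the two proofs handle convergence and the KL-decrease. The paper's proof is a one-line citation: it invokes \cite[Theorem~1]{IPS-DR}, where Darroch and Ratcliff prove exactly this monotone KL-decrease for non-negative matrices with column sums one, and then observes that rescaling $\tilde A$ by $1/\alpha$ leaves the update~\eqref{eq:ipsupdate} unchanged. You instead reproduce the Darroch--Ratcliff argument from scratch: the identity $D_k-D_{k+1}=\tfrac1\alpha\sum_i b_i\log(b_i/b^{(k)}_i)$ via Birch, the weighted AM--GM bound $\sum_j p^{(k+1)}_j\le 1$, and the log-sum inequality. You then go further than the paper by extracting a convergent subsequence and identifying the limit with the (extended) MLE via the closed binomial equations~\eqref{eq:ZarMA}, mirroring the device from Theorem~\ref{thm:MLEviaMomentMap}. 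This makes your argument self-contained and even covers the boundary case where only the extended MLE exists, whereas the paper is content to defer entirely to~\cite{IPS-DR}. Both approaches are correct; yours is more explicit, the paper's is shorter.
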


\begin{proof}
The proof of convergence of IPS is given in~\cite[Theorem~1]{IPS-DR}
in the case where the entries of $A$ are real and non-negative with each  column of $A$ summing to one.
There, the authors show that each step of IPS decreases the KL divergence $\KL(\hat{p} \| p^{(k)})$ from the $k$th iterate $p^{(k)}$ to the MLE $\hat{p}$.  Since replacing $A$ by $\frac{1}{\alpha}A$ does not change the update step \eqref{eq:ipsupdate}, the KL divergence also decreases for any matrix  with real and non-negative entries and all column sums equal.

We explain how this covers log-linear models defined by integer matrices with $\ones$ in the row span.
We modify $A$ without changing its row span, i.e. without changing the model $\mathcal{M}_A$.
First, we add a sufficiently large positive integer to every entry of $A$. For a general choice of integer, this does not change $\rsp(A)$ since it adds a multiple of the vector $\mathbbm{1}$, which belongs to $\rsp(A)$, to every row. 
 Second, let $\alpha$ be the maximum of the column sums $a_{+j}$. 
 Add another row to the matrix, with entries $\alpha - a_{+j}$. 
The extra row is a linear combination of $\mathbbm{1}$ and the rows of $A$, so the augmented matrix has the same row span as $A$. The column sums of the augmented matrix $\tilde{A}$ are all $\alpha$. 
\end{proof}

\begin{remark}
We saw in Section~\ref{sec:MLE} that $\hat{p} = {\rm argmin}_{p \in \Mcal} \KL ( \bar{u} \| p )$. Here, we use KL divergence differently, measuring the KL divergence from iterate $p^{(k)}$ to the MLE, $\KL( \hat{p} \| p^{(k)} )$.
\end{remark}

Curiously, when  IPS for log-linear models in~\eqref{eq:ipsupdate} is applied to the independence model, we do not recover the classical IPS with Sinkhorn updates, because the column sums of the integer matrix $A$ for the independence model in \eqref{eqn:Aforindependence} are $\alpha = 2$, hence there is a square root in the update step. 
If, instead, we did IPS with the same matrix $A$ but $\alpha=1$ in~\eqref{eq:ipsupdate} we would recover the two steps in~\eqref{eqn:IPS_two_steps} in a single step. 
This leads naturally to the question of which exponents $\alpha$ achieve convergence, and how the choice of $\alpha$ affects the convergence rate. This is the essence of an open problem in algebraic statistics, see \cite[Section~7.3]{owl}.

\subsection{Norm minimization}

In this section, we explain how Sinkhorn scaling generalizes to norm minimization  in invariant theory;
see the top arrow of Figure~\ref{fig:algorithms}.

The condition that a matrix can be scaled to a doubly stochastic matrix is dual to testing membership in the null cone under a group action, as follows.
We consider pairs of diagonal matrices $(D_1,D_2)$ of determinant one that act on square matrices $M$ via $M \mapsto D_1 M D_2$.
A matrix  does not lie in the null cone under this action if and only if its orbit closure contains a  matrix $M$ such that the matrix with $(i,j)$ entry $|m_{ij}|^2$ is a non-zero scalar multiple of a doubly stochastic matrix~\cite[Corollary 3.6]{GargOliveira}.
This is an instance of Kempf-Ness.
Norm minimization on the orbit of a square matrix either converges to zero, or to a non-zero matrix $M$ at which the moment map vanishes.
The condition $\mu(M) = 0$ translates to the matrix with entries $|m_{ij}|^2$ being a scalar multiple of a doubly stochastic matrix.
So we see that norm minimization scales to a non-zero multiple of a doubly stochastic matrix, if such a matrix exists in the orbit.

Norm minimization on an orbit can be considered for a wide range of groups and their actions.
If a group $G$ can be expressed as a product of groups, then the alternating minimization idea from Sinkhorn's algorithm generalizes. An important example of this is operator scaling, which solves the scaling problem for the left-right action of $\SL_{m_1}(\CC) \times \SL_{m_2}(\CC)$ on the space of matrix tuples $(\CC^{m_1 \times m_2})^n$.
We discuss connections between operator scaling and statistics in our companion paper~\cite{Gaussianpaper}.

We consider the norm minimization problem for the action of the torus $\GT_d(\CC)$ given by the matrix $n A$ with linearization $b$.
We take $b = Au$, where $u$ is a vector of counts.
By Kempf-Ness, Theorem~\ref{thm:kempfNessTorus}, a vector is not in the null cone if and only if there is a non-zero vector $q$ in its orbit closure satisfying $Aq^{(2)} = \Vert q \Vert^2b$. Hence, the problem of scaling a vector by acting with the torus to such a non-zero vector $q$ is dual to testing membership in the null cone under the torus action. This duality generalizes the discussion of doubly stochastic matrices above.

Since the vector $\ones$ is semistable, see Theorem~\ref{thm:MLEpolystableTorus}, norm minimization converges to such a non-zero vector $q$. This is a convex optimization problem, as follows.
Consider the action of $\GT_d(\CC)$
given by matrix $A' = nA - b \otimes \ones \in \ZZ^{d \times m}$. 
For a torus element $(\lambda_1, \ldots, \lambda_d)$, the coordinate change $y_i := \log \vert \lambda_i \vert^2$ gives
    \[
        \capac(\ones) = \inf_{\lambda \in \GT_d(\CC)} \, \| \lambda \cdot \ones \|^2 
        =  \inf_{\lambda \in \GT_d(\CC)} \, \sum_{j=1}^m \prod_{i=1}^d \vert \lambda_i \vert^{2 {a'_{ij}}}
        = \inf_{y \in \RR^d} \, \sum_{j=1}^m \exp \langle y, a'_{j} \rangle.
    \]
Convexity then follows from the fact that each exponential function is convex and a sum of convex functions is convex. This minimization problem is known as geometric programming. 
Hence, common algorithms from the vast literature on convex optimization can be used to compute the capacity and find the MLE, e.g. interior point methods \cite{buergisser2020interiorpoint} or ellipsoid methods.

\subsection{Comparison of algorithms}

We have seen in the previous two subsections that IPS and norm minimization are generalizations of Sinkhorn scaling that have emerged in different communities. 
Theorem~\ref{thm:MLEviaMomentMap} closes the cycle of algorithms from different communities, by showing how to obtain the (extended) MLE from a complex point of minimal norm in an orbit (or orbit closure); see Figure~\ref{fig:algorithms}.

This bridges several differences between IPS and norm minimization.
We summarize these differences here.
First, when computing the capacity, the norm is minimized along a \emph{complex} orbit closure (see Theorem~\ref{thm:MLEviaMomentMap}), whereas every step in IPS involves \emph{real} numbers.
Secondly, the torus action given by  matrix $n A$ that is used for computing the capacity is linearized by  $b = Au$ (see Theorem~\ref{thm:MLEviaMomentMap}), whereas IPS uses the action given by matrix $ A$ with trivial linearization $b=0$.
Finally,
the objective functions differ: the capacity is defined in terms of the Euclidean norm, which does not appear in IPS; instead 
IPS minimizes KL divergence (see Proposition~\ref{prop:cap_KL}). In the following example we see that, while IPS decreases the KL divergence to the MLE, it may increase the Euclidean norm.

\begin{example}
Consider the matrix $A$ and vector of counts $u$ from Example~\ref{ex:loglinear}.
We can use IPS to compute the MLE in $\Mcal_A$.
We start at the uniform distribution $p^{(0)} = \frac13 \ones$ and do update steps as in~\eqref{eq:ipsupdate} with matrix $A$. These IPS steps converge by Proposition~\ref{prop:cap_KL}, since the matrix $A$ has real non-negative entries and all column sums equal. 
We obtain 
$ p^{(1)} = \begin{bmatrix} \frac5{12} & \frac{\sqrt{15}}{12} & \frac14 \end{bmatrix}\T $. Note that the sum of the entries of $p^{(1)}$ is strictly less than one.
The KL divergence from the uniform distribution to the MLE is $KL(\hat{p} \| p^{(0)}) \sim 0.047$, and after the first update it is $KL(\hat{p} \| p^{(1)}) \sim 0.016$. 
However, we have $\| p^{(1)} \|^2 = \frac{49}{144}$, which exceeds $\| p^{(0)} \|^2 = \frac13$.
\hfill\exSymbol
\end{example}

\section{Comparison with multivariate Gaussian models}
\label{sec:comparison}

We highlight similarities and differences with the multivariate Gaussian setting studied in~\cite{Gaussianpaper}. For this, we compare results from this paper with the related results in \cite{Gaussianpaper}.
We start by comparing the two statistical settings.

In the discrete setting, a model is given as a subset of the $(m-1)$-dimensional probability simplex $\Delta_{m-1} \subseteq \RR^m$. In comparison, in the multivariate Gaussian setting, a model is given by a set of concentration matrices $\Psi$ in the cone of positive definite matrices.
For a discrete model $\mathcal{M} \subseteq \Delta_{m-1}$ the data is a vector of counts $u \in \ZZ^m_{\geq 0}$ with $u_+ = n$ the total numbers of observations. The log-likelihood given $u$ at $p \in \mathcal{M}$ is $\sum_{j=1}^m u_j \log(p_j)$. In comparison, for a Gaussian model the data is summarized by the sample covariance matrix $S_Y = \frac{1}{n} \sum_{i=1}^n Y_i Y_i\T$ and the log-likelihood  given a tuple of samples $Y \in (\RR^m)^n$ is
$\log \det (\Psi) - \mathrm{tr} (\Psi S_Y)$.

\subsection{Stability}
In both papers we link notions of stability under group actions to maximum likelihood estimation of certain statistical models: for log-linear models in Theorem~\ref{thm:MLEpolystableTorus} and for Gaussian group models in \cite[Section 3]{Gaussianpaper}. For log-linear models it is enough to consider actions of complex tori on $\CC^m$. In contrast, in \cite{Gaussianpaper} we work with actions of (reductive algebraic) groups over $\RR$ or $\CC$, depending on whether we consider multivariate Gaussian distributions on $\RR^m$ or $\CC^m$. In the log-linear case we study stability of the all-ones vector, while in \cite{Gaussianpaper} we consider the stability notions for the observed tuple of samples. 

For log-linear models, the log-likelihood is always bounded from above and the all-ones vector cannot be unstable. In contrast, in the Gaussian setting a tuple of samples is unstable if and only if the log-likelihood is not bounded from above. 
In both cases, semistability is equivalent to the log-likelihood being bounded from above and polystability is equivalent to the existence of an MLE. In the log-linear case, the MLE is unique if it exists, while for Gaussian group models there may be infinitely many.
In fact, the existence of a unique MLE for Gaussian group models relates to stability of a tuple of samples.
In contrast, for log-linear models the all-ones vector is never stable.

\subsection{MLE computation}
An important similarity between the log-linear and Gaussian settings is that norm minimizers under the respective group actions give an MLE (if it exists), see Theorem~\ref{thm:MLEviaMomentMap} and \cite[Section 3]{Gaussianpaper}. 
For log-linear models, we compute real MLEs from complex torus orbits.
However, for Gaussian group models, we  compute the MLE over $\mathbb{K} \in \{ \mathbb{R}, \mathbb{C} \}$ from orbits over the same field $\mathbb{K}$.
If the all-ones vector is semistable but not polystable, 
Theorem~\ref{thm:MLEviaMomentMap} yields the extended MLE.
However, in the Gaussian case, if a tuple of samples $Y$ is semistable but not polystable there is no corresponding notion of extended MLE.

\subsection{Scaling } From the point of view of scaling algorithms, Sinkhorn's algorithm is  a common origin to both the log-linear and the Gaussian settings. As we described in Section~\ref{sec:scaling}, Sinkhorn scaling to target marginals is IPS for the independence model and this extends to IPS for a general log-linear model. On the Gaussian side, Sinkhorn scaling generalizes to alternating minimization procedures for computing MLEs of matrix normal models  and tensor normal models .
This algorithm is used both in invariant theory for norm minimization and in statistics to compute the MLE; see~\cite{Gaussianpaper}. In contrast, for log-linear models the algorithms from invariant theory and statistics are not the same; see Figure~\ref{fig:algorithms}.

\medskip

\noindent We conclude this paper by pointing out that log-linear models and the Gaussian group models in \cite{Gaussianpaper} are examples of exponential transformation families. Hence, it is an interesting and natural question to ask whether there is a unifying concept that links invariant theory to maximum likelihood estimation for exponential families.

\bigskip

{\small
\paragraph{\textbf{Acknowledgements}}
We are grateful to 
Peter B\"urgisser, Mathias Drton, and Bernd Sturmfels for fruitful discussions.
We are also grateful to an anonymous referee for helpful comments that improved the paper.
CA was partially supported by the Deutsche Forschungsgemeinschaft (DFG) in the context of the Emmy Noether junior research group KR 4512/1-1.
KK was
partially supported by the Knut and Alice Wallenberg Foundation within their WASP
(Wallenberg AI, Autonomous Systems and Software Program) AI/Math initiative. 
Research of PR is funded by the European Research Council (ERC) under the European’s Horizon 2020 research and innovation programme (grant agreement no. 787840). 
}

 \bibliographystyle{alpha}
 \bibliography{literatur}

\medskip

\appendix

\section{Hilbert-Mumford for a torus action}
\label{sec:appendixHM} 

Almost all the results in this paper use the polyhedral characterisation of stability under a torus action, given by the Hilbert-Mumford criterion (Theorem~\ref{thm:HMtorus}). In this appendix we present a proof of Theorem~\ref{thm:HMtorus}, in what we hope is an elementary and accessible style.

\begin{remark}[Disregarding the linearization]
\label{rem:no_b}
The setting of Theorem~\ref{thm:HMtorus} is a torus action of $\GT_d$ on $\CC^m$ given by a matrix $A \in \ZZ^{d \times m}$ with linearization $b \in \ZZ^d$. For our statistical connections, it is important to separate the role of $A$ (which is fixed by the model) from that of $b$ (which depends on the data). However, we can remove the need for a linearization by altering the matrix $A$, as follows. The action of $\GT_d$ on $\CC^m$ given by matrix $A \in \ZZ^{d \times m}$ with linearization $b \in \ZZ^d$ is the action given by matrix $A' \in \ZZ^{d \times m}$ with linearization $0 \in \ZZ^d$, where the matrix $A'$ has $j$th column $a_j - b$, see~\eqref{eq:torusAction}. Hence we assume without loss of generality that the linearization is zero for proving Theorems~\ref{thm:HMtorus} and~\ref{thm:kempfNessTorus}.
The effect of the linearization on the moment map is outlined in Section~\ref{sec:the_moment_map}.
\end{remark} 

The classical statement of the Hilbert-Mumford criterion, see~\cite[page~53]{MumfordGIT}, concerns one parameter subgroups. 
For the group $\GT_d$, a one parameter subgroup is given by a map 
\begin{equation}\label{eqn:1ps_sigma}
\begin{split}
\sigma: \CC^\times & \to \GT_d \\ 
\lambda & \mapsto (\lambda^{\sigma_1}, \ldots, \lambda^{\sigma_d} ) ,
\end{split} 
\end{equation}
for some fixed $(\sigma_1,\ldots,\sigma_d) \in \ZZ^d$. We use $\sigma$ to denote both the map and the vector $(\sigma_1,\ldots,\sigma_d)$.
For $v \in \CC^m$, the $j$th entry of $\sigma(\lambda) \cdot v$ is
  \[ 
  (\sigma(\lambda) \cdot v)_j = \lambda^{\langle a_j , \sigma \rangle} v_j . 
  \] 
We consider $\lim_{\lambda \to 0} \sigma(\lambda) \cdot v$. The $j$th entry of the limiting vector is zero for $j \notin \supp(v)$. For $j \in \supp(v)$, we have three possibilities:
    \begin{equation}\label{eq:1psgLimit}
        \left(\lim_{\lambda \to 0} \;\; \sigma(\lambda) \cdot v \right)_j =
        \begin{cases}
            0       & \quad \text{if } \langle \sigma, a_j \rangle > 0\\
            v_j     & \quad \text{if } \langle \sigma, a_j \rangle = 0\\
            \infty  & \quad \text{if } \langle \sigma, a_j \rangle < 0
        \end{cases}
    \end{equation}

The classical statement of the Hilbert Mumford criterion for a torus action is as follows.

\begin{theorem}\label{thm:specialHilbertMumford}
Consider the action of $\GT_d$ on $\CC^m$ via the matrix $A \in \ZZ^{d \times m}$. Given a non-zero $v \in \CC^m$, with zero in its orbit closure, there exists a one-parameter subgroup of $\GT_d$ that scales $v$ to zero in the limit.
\end{theorem}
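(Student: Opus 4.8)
The plan is to convert the analytic hypothesis $0 \in \overline{\GT_d \cdot v}$ into a polyhedral statement about the columns $a_j$, and then read off the required one-parameter subgroup from a separating hyperplane. By~\eqref{eq:1psgLimit}, a subgroup $\sigma \in \ZZ^d$ satisfies $\lim_{\lambda \to 0} \sigma(\lambda)\cdot v = 0$ exactly when $\langle \sigma, a_j\rangle > 0$ for every $j \in \supp(v)$. Thus the entire theorem reduces to producing an integer vector $\sigma$ that pairs strictly positively with all columns $a_j$ indexed by $\supp(v)$; equivalently, a hyperplane through the origin having all these $a_j$ strictly on one side.

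First I would record the elementary equivalence between the existence of such a $\sigma$ and the condition $0 \notin P_v(A)$, where $P_v(A) = \conv\{a_j : j \in \supp(v)\}$. If $0 \in P_v(A)$, writing $0 = \sum_j c_j a_j$ as a convex combination immediately blocks any strictly positive $\sigma$, since then $\sum_j c_j \langle \sigma, a_j\rangle = 0$. Conversely, since $P_v(A)$ is a compact convex set not containing the origin, the strict separating hyperplane theorem yields $\sigma \in \RR^d$ with $\langle \sigma, a_j \rangle > 0$ for all $j \in \supp(v)$; because the $a_j$ are integer vectors, $\sigma$ can be taken in $\QQ^d$ and then cleared to lie in $\ZZ^d$.

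The crux is therefore to prove $0 \in \overline{\GT_d \cdot v} \Rightarrow 0 \notin P_v(A)$, and I would argue the contrapositive. Parametrizing the orbit by $y_i = \log|\lambda_i|^2$ gives $\capac(v) = \inf_{y \in \RR^d} \sum_{j\in\supp(v)} |v_j|^2\exp\langle y, a_j\rangle$, and $0 \in \overline{\GT_d \cdot v}$ means $\capac(v) = 0$ by Definition~\ref{def:StabilityNotions}(a). Suppose $0 = \sum_j c_j a_j$ with $c_j \geq 0$, $\sum_j c_j = 1$, and all indices in $\supp(v)$. Dropping the non-negative terms with $c_j = 0$ and applying weighted AM--GM with weights $c_j$ to the quantities $\tfrac{|v_j|^2}{c_j}\exp\langle y, a_j\rangle$ yields
\[ \sum_{j \in \supp(v)} |v_j|^2 \exp\langle y, a_j\rangle \;\ge\; \prod_{j:\,c_j>0}\Big(\frac{|v_j|^2}{c_j}\Big)^{c_j}\exp\Big(\sum_j c_j\langle y, a_j\rangle\Big) = \prod_{j:\,c_j>0}\Big(\frac{|v_j|^2}{c_j}\Big)^{c_j}, \]
where the last equality uses $\sum_j c_j\langle y, a_j\rangle = \langle y, 0\rangle = 0$. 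The right-hand side is a positive constant \emph{independent of $y$}, so $\capac(v) > 0$, contradicting $\capac(v)=0$.

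Combining the three steps completes the argument: from $0 \in \overline{\GT_d \cdot v}$ we get $0 \notin P_v(A)$, hence a separating $\sigma \in \ZZ^d$, which by~\eqref{eq:1psgLimit} scales $v$ to zero as $\lambda \to 0$. I expect the uniform lower bound on the capacity via weighted AM--GM to be the main technical obstacle, precisely because the estimate must be seen to hold independently of the direction $y$; the separation and denominator-clearing steps are routine.
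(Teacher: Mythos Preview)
Your argument is correct, and it takes a genuinely different route from the paper's. Both proofs reduce to the polyhedral claim $0 \notin P_v(A)$ (equivalently, to the existence of a strictly separating integer $\sigma$), but they establish the key implication $0 \in \overline{\GT_d \cdot v} \Rightarrow 0 \notin P_v(A)$ differently. The paper, following Sury, restates the polyhedral condition as a Gordan-type alternative and then works directly with a sequence $\lambda^{(n)}$ driving $v$ to zero: assuming a nontrivial nonnegative relation $\sum_j t_j a_j = 0$ leads, by elementary manipulation of the products $\prod_i (\lambda_i^{(n)})^{a_{ij}}$, to a contradiction with $\lambda^{(n)} \cdot v \to 0$. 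You instead bound the capacity from below by weighted AM--GM, using the convex combination $0 = \sum_j c_j a_j$ to kill the dependence on $y$ in the exponent; this gives a clean $y$-independent positive lower bound on $\|\lambda \cdot v\|^2$ and hence $\capac(v) > 0$. Your approach meshes naturally with the capacity/geometric-programming viewpoint emphasized elsewhere in the paper and makes the convexity of the problem do the work, whereas the paper's argument is more hands-on and avoids invoking AM--GM or the separating hyperplane theorem. Both the AM--GM bound and your passage from a real separating $\sigma$ to an integer one (the separating set is an open cone, hence contains rational and thus integer points) are sound.
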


We give a proof of Theorem~\ref{thm:specialHilbertMumford} following~\cite{Sury}. Other references for the statement of the theorem include~\cite[ Proposition~5.3]{PopovVinberg} and~\cite[Lemma~3.4]{Birkes} for a torus, and~\cite[Theorem 4.1]{Birkes},\cite[Page 53]{MumfordGIT} and \cite[Theorem~5.2]{PopovVinberg} for a general reductive group.

\begin{proof}[Proof of Theorem~\ref{thm:specialHilbertMumford}]
We seek a one parameter subgroup  $\sigma: \CC^\times \to \GT_d$ such that $\lim_{\lambda \to 0} \, \sigma(\lambda) \cdot v$ is zero. 
From the form of a one parameter subgroup from~\eqref{eqn:1ps_sigma} and the limiting behaviour from~\eqref{eq:1psgLimit}, we see that 
this is equivalent to showing that
    \begin{equation}
        \label{eqn:gordan}
    \text{there exists} \, \, \sigma \in \ZZ^d \, \, \text{such that} \, \, \langle \sigma, a_j \rangle > 0 \, \, \text{for all} \, \, j \in \supp(v).
    \end{equation}
    Reordering the entries of $v$, we can assume without loss of generality that $\supp(v) = [k]$ for some $k \leq m$. 
    Then the existence of such a $\sigma \in \ZZ^d$ as in~\eqref{eqn:gordan} is equivalent to the following statement about $A \in \ZZ^{m \times d}$:
    \begin{equation}\label{eq:proofClassicalHM}
\begin{matrix}    \text{if} \, \, t = (t_1,\ldots,t_k) \in \RR^k \backslash \{0\} \, \, \text{is such that} \, \,
        t_1 a_{i1} + \cdots + t_k a_{ik} = 0 \, \, \text{for all} \, \, i \in [d] \\ 
\text{then at least two entries of $t$ are of opposite sign.}
        \end{matrix} 
    \end{equation}
    The equivalence of~\eqref{eqn:gordan} and~\eqref{eq:proofClassicalHM} is~\cite[Lemma~1.1]{Sury}, and is an analogue of Gordan's Theorem~\cite[Section 7.8 Equation~(31)]{SchrijverBook} over the rational numbers.
    Thus it remains to prove~\eqref{eq:proofClassicalHM}.

    Since $v$ has $0$ in its orbit closure, there exists a sequence $\lambda^{(n)} = (\lambda^{(n)}_1,\ldots,\lambda^{(n)}_d) \in \GT_d$ with $\lambda^{(n)} \cdot v \to 0$ as $n \to \infty$. In coordinates, 
    \begin{equation}\label{eq:lambdaN}
        \big( \lambda^{(n)}_1 \big)^{a_{1j}} \cdots \big( \lambda^{(n)}_d \big)^{a_{dj}} \to 0 \quad \text{ as } n \to \infty
        \quad \text{for all} \quad j \in [k] .
    \end{equation}
        The hypothesis of~\eqref{eq:proofClassicalHM} is that we have $t \in \RR^k \backslash \{0\}$ with $t_1 a_{i1} + \cdots + t_k a_{ik} = 0$ for all $i \in [d]$. Without loss of generality, we can assume $t_1$ is non-zero and therefore
\begin{equation}
\label{eqn:ai1}
        -a_{i1} = \frac{t_2}{t_1} a_{i2} + \cdots + \frac{t_k}{t_1} a_{ik} \quad \text{for all} \quad i \in [d] ,
    \end{equation}
    which implies
    \begin{equation}\label{eq:classicalHMcontra}
        \prod_{i=1}^d \Big(\lambda^{(n)}_i \Big)^{-a_{i1}} = 
        \left( \prod_{i=1}^d \left(\lambda^{(n)}_i \right)^{a_{i2}} \right)^{\frac{t_2}{t_1}} \cdots
        \left( \prod_{i=1}^d \left(\lambda^{(n)}_i \right)^{a_{ik}} \right)^{\frac{t_k}{t_1}}.
    \end{equation}
If $\nicefrac{t_j}{t_1} \geq 0$ for all $j \in \{ 2,\ldots,k \}$, then the right-hand side of \eqref{eq:classicalHMcontra} either equals one (if all $\nicefrac{t_j}{t_1}$ are zero) or tends to zero (if there exists some $j$ with $\nicefrac{t_{j}}{t_1} > 0$). But the left-hand side of~\eqref{eq:classicalHMcontra} tends to infinity as $n \to \infty$, since it is the inverse of~\eqref{eq:lambdaN} for $j=1$. Hence $\nicefrac{t_j}{t_1}$ must be strictly negative for some $j$, i.e. $t_1$ and $t_j$ have opposite signs.
\end{proof}
 
Theorem~\ref{thm:specialHilbertMumford} has the following generalization. It can be proved via a polyhedral geometry argument similar to the proof of Theorem~\ref{thm:specialHilbertMumford}.

\begin{theorem}[{\cite[p.~173]{KraftBook}}]\label{thm:generalHilbertMumford}
Consider the action of $\GT_d$ on $\CC^m$ given by matrix $A \in \ZZ^{d \times m}$, and fix $v \in \CC^m$. If $w \in \overline{\GT_d \cdot v} \backslash \GT_d \cdot v$, then there exists a one-parameter subgroup that scales $v$ to an element of $\GT_d \cdot w$ in the limit.
\end{theorem}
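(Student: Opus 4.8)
The plan is to mimic the proof of Theorem~\ref{thm:specialHilbertMumford}, now aiming at the nonzero target $w$ instead of $0$. Write $S := \supp(v)$ and $T := \supp(w)$. Because the torus acts by coordinatewise multiplication, a coordinate that is zero in $v$ stays zero along the whole orbit and its closure, so $T \subseteq S$ and $c_j := w_j / v_j$ is a well-defined nonzero scalar for every $j \in T$. Choose a sequence $\lambda^{(n)} \in \GT_d$ with $\lambda^{(n)} \cdot v \to w$; reading this coordinatewise, $(\lambda^{(n)})^{a_j} \to c_j \in \CC^\times$ for $j \in T$ while $(\lambda^{(n)})^{a_j} \to 0$ for $j \in S \setminus T$. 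Setting $y^{(n)}_i := \log |\lambda^{(n)}_i|$ recasts this as: the real number $\langle y^{(n)}, a_j \rangle$ converges (hence stays bounded) for $j \in T$, and tends to $-\infty$ for $j \in S \setminus T$.

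First I would produce the one-parameter subgroup. I seek $\sigma \in \ZZ^d$ with $\langle \sigma, a_j \rangle = 0$ for $j \in T$ and $\langle \sigma, a_j \rangle > 0$ for $j \in S \setminus T$; by the limit rule \eqref{eq:1psgLimit}, such a $\sigma$ scales $v$ as $\lambda \to 0$ to the vector $v'$ with $v'_j = v_j$ on $T$ and $v'_j = 0$ elsewhere. The existence of $\sigma$ is a Gordan-type alternative, just as in the step from \eqref{eqn:gordan} to \eqref{eq:proofClassicalHM}: it can fail only if some nonnegative combination $\sum_{j \in S \setminus T} t_j a_j$, with the $t_j$ not all zero, lies in the linear span of $\{a_j : j \in T\}$, say $\sum_{j \in S \setminus T} t_j a_j + \sum_{j \in T} s_j a_j = 0$. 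I would rule this out by pairing the relation with $y^{(n)}$: the pairing is identically zero, yet as $n \to \infty$ its $T$-part converges while its $S \setminus T$-part tends to $-\infty$ (the $t_j$ are nonnegative and not all zero), a contradiction. Clearing denominators turns the resulting rational $\sigma$ into an integral one without affecting the constraints.

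The step I expect to be the main obstacle is checking that $v'$ lands in $\GT_d \cdot w$, and not merely in the coordinate subspace with support $T$. Concretely I need $\mu \in \GT_d$ solving the monomial system $\mu^{a_j} = c_j$ for all $j \in T$. By standard torus theory this system is solvable exactly when the prescribed values respect every integer relation among the corresponding columns: whenever $\sum_{j \in T} r_j a_j = 0$ one must have $\prod_{j \in T} c_j^{r_j} = 1$ --- equivalently, $(c_j)_{j \in T}$ lies in the image of the homomorphism $\mu \mapsto (\mu^{a_j})_{j \in T}$, whose image is precisely the subgroup cut out by these relations. Here the sequence is used a second time: for any such relation, $\prod_{j \in T} ((\lambda^{(n)})^{a_j})^{r_j} = (\lambda^{(n)})^{\sum_j r_j a_j} = 1$ for every $n$, so passing to the limit gives $\prod_{j \in T} c_j^{r_j} = 1$. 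Hence the required $\mu$ exists, and $\mu \cdot v' = w$ gives $v' = \mu^{-1} \cdot w \in \GT_d \cdot w$. Thus $\sigma$ is a one-parameter subgroup scaling $v$ to a point of the orbit of $w$, as desired.
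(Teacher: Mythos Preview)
Your proposal is correct and follows precisely the route the paper points to: the paper does not actually write out a proof of Theorem~\ref{thm:generalHilbertMumford} but cites \cite[p.~173]{KraftBook} and remarks that ``it can be proved via a polyhedral geometry argument similar to the proof of Theorem~\ref{thm:specialHilbertMumford}.'' Your argument is exactly such an adaptation --- the Gordan-type alternative for producing $\sigma$ with $\langle \sigma, a_j\rangle = 0$ on $T$ and $> 0$ on $S\setminus T$, together with the character-lattice criterion for solving $\mu^{a_j}=c_j$ on $T$, are the two ingredients one expects, and both are verified cleanly using the convergent sequence $\lambda^{(n)}$.
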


\begin{remark} Theorem~\ref{thm:generalHilbertMumford} has an analogue for a general reductive group~$G$, but it only applies with the additional assumption that $G \cdot w$ is the unique closed orbit in $\overline{G \cdot v}$, see \cite[Section~6.8]{PopovVinberg}.
\end{remark} 
  
Equipped with Theorem~\ref{thm:specialHilbertMumford} and Theorem~\ref{thm:generalHilbertMumford}, we now prove Theorem~\ref{thm:HMtorus}.
The proof mostly rests on Theorem~\ref{thm:specialHilbertMumford}; we only use the stronger statement of Theorem~\ref{thm:generalHilbertMumford} for one direction of one of the four cases.
    
\begin{proof}[Proof of Theorem~\ref{thm:HMtorus}]
We first prove parts (a) and (b). If $v=0$, then the polytope $P_v(A)$ is empty, hence $0 \notin P_v(A)$. If $v \neq 0$ is unstable, then there exists some $\sigma \in \ZZ^d$ such that $\langle \sigma, a_j \rangle > 0$ for all $j \in \supp(v)$, by combining Theorem~\ref{thm:specialHilbertMumford} with~\eqref{eq:1psgLimit}. Hence $\sigma$ defines a hyperplane 
    \[ H_{\sigma} = \{ x \in \RR^d \mid \langle \sigma, x \rangle = 0 \} \]
that separates zero from $P_v(A)$. By Farkas' lemma, see \cite[Section~7.3]{SchrijverBook}, such a hyperplane exists if and only if $0 \notin P_v(A)$.

For (c), we first prove that if $0$ is on the boundary of $P_v(A)$, then $v$ is not polystable. We construct a point in the orbit closure of $v$, with support strictly smaller than that of $v$, and hence deduce that the orbit of $v$ is not closed. 
Since $0$ lies on the boundary of $P_v(A)$, it is contained in a minimal face $F \subsetneq P_v(A)$. Since $A$ has integer entries, there is a hyperplane $H_{\sigma}$, with $\sigma \in \ZZ^d$, such that $F = H_{\sigma} \cap P_v(A)$. We choose the sign of $\sigma$ so that it has non-negative inner product with all of $P_v(A)$. This ensures that the limit $w := \lim_{\lambda \to 0} \sigma(\lambda) \cdot v$ exists.
The limit $w$ has $\supp(w) \subsetneq \supp(v)$, since $P_w(A) \subseteq F$. Hence $w \in \overline{\GT_d \cdot v} \backslash \GT_d \cdot v$, and $\GT_d \cdot v$ is not closed.

For the converse direction of (c), we show that if $v$ is semistable but not polystable, then $0 \notin {\rm relint} (P_v(A))$. Let $w' \in \overline{\GT_d \cdot v} \backslash \GT_d \cdot v$. There exists $\sigma \in \ZZ^d$ such that $w := \lim_{\lambda \to 0} \sigma(\lambda) \cdot v \in \GT_d \cdot w'$, by Theorem~\ref{thm:generalHilbertMumford}. We have $\supp(w) \subseteq \supp(v)$ and, moreover, $\supp(w) \subsetneq \supp(v)$ (otherwise $w=v$ by~\eqref{eq:1psgLimit}, a contradiction). 
Hence $\langle \sigma, a_j \rangle > 0$ for all $j \in \supp(v) \backslash \supp(w)$, while $\langle \sigma, a_j \rangle = 0$ for all $j \in \supp(w)$, by~\eqref{eq:1psgLimit}. We obtain $P_v(A) \nsubseteq H_{\sigma}$ and $P_w(A) = H_{\sigma} \cap P_v(A)$, i.e. $P_w(A)$ is a proper face of $P_v(A)$. We have $\GT_d \cdot w = \GT_d \cdot w' \subseteq \overline{\GT_d \cdot v}$ and so $w$ is semistable as $v$ is semistable. By~(b), $0 \in P_w(A)$ and hence $0$ is on the boundary of $P_v(A)$.

It remains to prove (d). We can assume $v$ is polystable, i.e. $0 \in \mathrm{relint}(P_v(A))$. We want to show that the dimension of the stabiliser $\{ \lambda \in \GT_d : \lambda \cdot v = v\}$ is zero if and only if the interior of $P_v(A)$ equals its relative interior (i.e. if and only if $P_v(A)$ is full-dimensional). Since $0 \in P_v(A)$, the equality of the interior and relative interior holds if and only if $U:= \mathrm{span} \{ a_j \colon j \in \supp(v) \}$ equals $\RR^d$. If the stabilizer is positive dimensional, it must contain a one-parameter subgroup, i.e. some $\sigma \in \ZZ^d \backslash \{0\}$ with $\sigma(\lambda) \cdot v = v $ for all $\lambda \in \CC^\times$. Then $\langle \sigma, a_j \rangle = 0$ for all $j \in \supp(v)$, so the orthogonal complement $U^{\perp} \subseteq \RR^d$ contains a line, and $U \neq \RR^d$. Conversely, if $U \neq \RR^d$ then there exists $\sigma \in U^\perp$, which can be chosen to have integer entries, since $A$ has integer entries. The one parameter subgroup $\sigma(\lambda)$ then lies in the stabilizer, which is therefore positive-dimensional.
\end{proof}

\section{Kempf-Ness for a torus action}
\label{sec:appendixKN} 

Many of the results in this paper use the Kempf-Ness theorem for a torus action, as stated in Theorem~\ref{thm:kempfNessTorus}. 
An elementary proof of the Kempf-Ness theorem for torus actions can be found in Sections 1 and 2 of the original paper of Kempf and Ness~\cite{KempfNess}. 
In this appendix, we translate between the setting of~\cite{KempfNess} and our setting, to explain how the results of \cite{KempfNess} give Theorem~\ref{thm:kempfNessTorus}.
In addition, we present an alternative proof of Theorem~\ref{thm:kempfNessTorus}, obtaining it as a consequence of Theorem~\ref{thm:HMtorus}.

As before, we consider the action of $\GT_d$ on $\CC^m$ given by a matrix $A \in \ZZ^{d \times m}$. We can assume without loss of generality that the linearization is $b=0$, by Remark~\ref{rem:no_b}.
We first describe how Theorem~\ref{thm:kempfNessTorus} follows from~\cite{KempfNess}.

\begin{proof}[First proof of Theorem~\ref{thm:kempfNessTorus}]
Let $v \in \CC^m$ and consider the action of $\GT_d$ via the matrix $A \in \ZZ^{d \times m}$. Recall that the moment map at $v$ is the derivative $D_{I} \, \gamma_v$, where 
\[
\gamma_v \colon \GT_d   \to \RR, \qquad 
\lambda  \mapsto \| \lambda \cdot v \|^2.
\]
Identifying the space of $\RR$-linear functionals ${\rm Hom}(\CC^d,\RR)$ with $\CC^d$ gives the moment~map 
    \[ \mu \colon \CC^m \to \CC^d, \qquad v \mapsto 2A v^{(2)},\]
where $v^{(2)}$ is the vector with $j$th coordinate $\vert v_j \vert^2$.

We translate between our notation and that in \cite{KempfNess}. Most importantly, our notion of ``polystable'' is called ``stable'' by Kempf and Ness, see~\cite[page~234]{KempfNess}. Our function $\gamma_v$ is denoted $p_v$ in \cite{KempfNess}. Moreover, ``a critical point $g \in G$ of $p_v$'' in \cite{KempfNess} means the vanishing of the moment map at $g \cdot v$:
    \[ 0 = D_g\, p_v = D_{I}\, p_{g \cdot v} = D_{I}\, \gamma_{g \cdot v} = \mu(g \cdot v).\]
Thus the \emph{polystable} part of Theorem~\ref{thm:kempfNessTorus} is a direct consequence of \cite[Theorem~0.1(a) and Theorem~0.2]{KempfNess} for $G = \GT_d$. The \emph{semistable} part follows from the polystable part, using the fact that any orbit closure for the group $\GT_d$ contains a unique closed orbit, see e.g. \cite[Bemerkung~1 on page~96]{KraftBook}. That unique closed orbit is not the zero orbit if and only if the vector is semistable.
\end{proof}

We end with an alternative proof of Theorem~\ref{thm:kempfNessTorus}, which uses an important connection between the polytope $P_v(A)$ and the moment map $\mu$, see~\cite{AtiyahConvexity} and~\cite{GuilleminSternberg}. The connection relates $P_v(A)$ to the image of the orbit $\GT_d \cdot v$ under the moment map. It is a first example of a moment polytope, an important object of study in invariant theory.

\begin{proof}[Second proof of Theorem~\ref{thm:kempfNessTorus}]
To link the moment map $\mu$ to the polytope $P_v(A)$, we need to rescale $\mu$ as follows:
    \begin{equation}\label{eqn:tildemu}
    \begin{split}
        \tilde{\mu} \colon \CC^m \backslash \{0\} &\to \RR^d,\\
        v &\mapsto \frac{\mu(v)}{2\|v\|^2} = \frac{A v^{(2)}}{\|v\|^2}.
        \end{split}
    \end{equation}
The vector $\nicefrac{v^{(2)}}{\| v \|^2}$ consists of non-negative numbers that sum up to one. Hence $\tilde{\mu}(v)$ is a convex combination of the columns of $A$, therefore $\tilde{\mu}(v) \in P_v(A)$.
The stronger statement
    \begin{equation}\label{eq:PolytopeVsMomentMap}
        \mathrm{relint}\, P_v(A) = \tilde{\mu}(\GT_d \cdot v),
    \end{equation}
was proven independently in \cite[Theorem~2]{AtiyahConvexity} and \cite[Theorem~4]{GuilleminSternberg}, see Remark~\ref{rem:ProjectiveSetting}.
Given~\eqref{eq:PolytopeVsMomentMap}, the polystable case of Theorem~\ref{thm:kempfNessTorus} is a direct consequence of Theorem~\ref{thm:HMtorus}(c), as follows. Polystability is equivalent to $0 \in \mathrm{relint}\, P_v(A)$ which, by~\eqref{eq:PolytopeVsMomentMap}, implies $0 \in \tilde{\mu}(\GT_d \cdot v)$, i.e. that $0 = \frac{A w^{(2)}}{\|w\|^2}$ for some $w \in \GT_d \cdot v$.
As in the first proof, the semistable case of Theorem~\ref{thm:kempfNessTorus} can be deduced from the polystable case, since any orbit closure $\overline{\GT_d \cdot v}$ contains a unique closed orbit.
\end{proof}

\begin{remark}\label{rem:ProjectiveSetting}
The statements in \cite[Theorem 2]{AtiyahConvexity} and \cite[Theorem 4]{GuilleminSternberg} discuss a moment map whose domain is a projective space $\PP_{\CC}^{m-1}$, rather than the space $\CC^m \backslash \{ 0\}$ in~\eqref{eqn:tildemu}.
However, the projective results still allow us to obtain \eqref{eq:PolytopeVsMomentMap}, as follows.
For non-zero $v \in \CC^m$, let $[v]$ be the point in $\PP^{m-1}_{\CC}$ that represents the line $\CC v$. We consider the action of $\GT_d$ on $\PP^{m-1}_{\CC}$ given by the matrix $A \in \ZZ^{d \times m}$. We denote the $\GT_d$-orbit of $[v]$ in $\PP^{m-1}_{\CC}$ by $\GT_d \cdot [v]$ (and denote the orbit closure by $\overline{\GT_d \cdot [v]}$).
The map $\tilde{\mu}$ factors through the projective space $\PP^{m-1}_{\CC}$ via a unique map $\bar{\mu} \colon \PP^{m-1}_{\CC} \to \RR^d$. In fact, $\bar{\mu}$ is the moment map for the action of $\GT_d$ on $\PP^{m-1}_{\CC}$ given by $A \in \ZZ^{d \times m}$. This action fits the setting of \cite{AtiyahConvexity,GuilleminSternberg} because $\PP^{m-1}_{\CC}$ is a compact K\"ahler manifold.
The results \cite[Theorem~2]{AtiyahConvexity} and \cite[Theorem~4]{GuilleminSternberg} give
    \[P_v(A) = \bar{\mu} \left( \overline{\GT_d \cdot [v]} \right).\]
For~\eqref{eq:PolytopeVsMomentMap}, we require a statement for
the orbit of $v$ rather than the orbit closure of $[v]$. The closure $\overline{\GT_d \cdot [v]}$ is the disjoint union of finitely many $\GT_d$ orbits. The orbits relate to $P_v(A)$ as follows.  For each open face $F$ of $P_v(A)$ the set $\bar{\mu}^{-1}(F) \cap \overline{\GT_d \cdot [v]}$ is a single $\GT_d$ orbit in $\PP^{m-1}_{\CC}$, see \cite[Theorem~2]{AtiyahConvexity}. In particular, when $F = \mathrm{relint}\, P_v(A)$ we obtain the orbit $\GT_d \! \cdot \! [v]$. This yields~\eqref{eq:PolytopeVsMomentMap}, since $\bar{\mu}(\GT_d \cdot [v]) = \tilde{\mu}(\GT_d \cdot v)$.
\end{remark}

\bigskip \bigskip

\noindent
\footnotesize {\bf Authors' addresses:}

\smallskip 

\noindent
\noindent Technische Universit\"at M\"unchen, Germany,
\hfill {\tt carlos.amendola@tum.de}

\noindent KTH Royal Institute of Technology, Sweden,
\hfill {\tt kathlen@kth.se}

\noindent  Technische Universit\"at Berlin, Germany,
\hfill {\tt reichenbach@tu-berlin.de}

\noindent  Harvard University, USA,
\hfill {\tt aseigal@math.harvard.edu}

\end{document}